\newtheorem{proposition}{Proposition}[section]
\newtheorem{theorem}{Theorem}[section]
\newtheorem{lemma}{Lemma}[section]
\newtheorem{corollary}{Corollary}[section]
\newcommand{\n}{\noindent}
\begin{document}
\title{Gyrogroups associated with groups}
\author{Ratan Lal and Vipul Kakkar}
\address{Department of Mathematics, Central University of Rajasthan, Rajasthan}
\email{vermarattan789@gmail.com, vplkakkar@gmail.com}
\date{}

\begin{abstract}
\n In this paper, we study the properties of the associated gyrogroup ${^\circ}G$ of a given group $G$ of nilpotency class $3$. We have proved that if $3$ does not divide the order of the group $G$, then the nilpotency class of the associated gyrogroup ${^\circ}G$ is same as that of the group $G$. We have also studied the problem of abelian inner mapping group in this context. 
\smallskip

\n \textbf{Keywords:} Gyrogroups, Nilpotent Groups, Inner Mapping Groups.
\end{abstract}

\maketitle
\section{Introduction}
\n The study of gyrogroups was initiated by Ungar in \cite{ung}. Gyrogroups are grouplike structures and non-associative generalization of groups. A groupoid $(L,\cdot)$ with identity is called a right loop if the equation $X a=b$ has a unique solution in $L$, for all $a,b\in L$. Let $L$ be a right loop and $y,z\in L$. Then, there is a bijective map $f(y,z)$ from $L$ to $L$ such that 
\[(xy)z=f(y,z)(x)(yz),\; \text{for all}\;x\in L.\]

\n A right loop $L$ is called a loop if the equation $a X = b$ has a unique solution in $L$. A right loop $L$ is called a gyrogroup if $f(a,b)=f(ab,a)^{-1}$ and $f(a,b)$ are automorphisms of $L$ for all $a,b\in L$. Gyrogroups are loops. Let $G$ be a group. Define a binary operation $\circ$ on $G$ by $x\circ y=y^{-1}xy^2$. Foguel and Ungar in \cite{foguel2001gyrogroups} proved that $(G,\circ)$ is a gyrogroup if and only if $G$ is central by a $2$-Engel group (see \cite[Theorem 3.7]{foguel2001gyrogroups}). In particular if $G$ is nilpotent group of class $3$, then $(G, \circ)$ is a gyrogroup. It is also shown that the associated right gyrogroup is a group if and only if the group $G$ is nilpotent group of class $2$ (see \cite[Theorem 3.6]{foguel2001gyrogroups}\label{s1t1}). We will denote the associated gyrogroup $(G,\circ)$ by $^{\circ}G$. Throughout the paper, $G$ will denote the finite nilpotent group of class $3$, otherwise will be stated separately.

\vspace{0.2 cm}

\n Let $L$ be a loop and $N$ be a subloop of $L$. Then $N$ is called a normal subloop of the loop $L$ if, for all $x, y \in L$, we have, $(i)$ $x N = N x$, $(ii)$ $x(y N)= (x y) N$ and $(iii)$ $(N x) y = N(xy)$. In a loop $L$, we have the following important subsets:
\begin{enumerate}
	\item [$(i)$]The set $N_{\lambda}(L) = \{a \in L \mid (a x) y = a(x y)\;\; \forall x, y \in L\}$ is called the left nucleus.
	\item[$(ii)$]The set $N_{\mu}(L) = \{a \in L \mid (x a) y = x(a y)\;\;\forall x, y \in L\}$ is called the middle nucleus.
	\item[$(iii)$]The set $N_{\rho}(L) = \{a \in L \mid (x y) a = x(y a)\;\;\forall x, y \in L\}$ is called the right nucleus.
	\item [$(iv)$] The set $N(L) = N_{\lambda}(L)\cap N_{\mu}(L) \cap N_{\rho}(L)$ is called the nucleus of $L$.
	\item [$(v)$] The set $C(L) = \{a\in L \mid x a = a  x \;\;\forall x \in L\}$ is called the commutant of $L$.
	\item [$(vi)$] The set $Z(L) = C(L)\cap N(L)$ is called the center of $L$.
\end{enumerate}  
The commutant $C(L)$ is not necessarily a subloop of $L$. Except this, all are subloops of $L$ in the above list. The center $Z(L)$ is the normal subloop of $L$. Let $x, y, z \in L$. Then, the commutator $[x,y]$ and the associator $A(x,y,z)$ are defined as the unique solutions of the following equations respectively,
\begin{eqnarray*}
xy &=& [x,y](yx),\\
\text{and}\;(xy)z &=& A(x,y,z)(x(yz)).
\end{eqnarray*}

\n Let $[L,L]$ and $\mathcal{A}(L)$ denotes the collection of all the commutators and the associators of the loop $L$. The commutator of the associated gyrogroups will be denoted by $^{\circ}[x,y]$. $[G,G,G]$ denotes the subgroup of the group $G$ generated by the triple commutators $[x,y,z]=[[x,y],z]$, for all $x,y,z\in G$. 

\vspace{0.2 cm}

\n In section $3$, we have studied the properties of nuclei, commutant and center of the associated gyrogroup. Moreover, we have proved that the commutant $C({^\circ}G)$, the center $Z({^\circ}G)$ of the loop ${^\circ}G$ and the center $Z(G)$ of the group $G$ all coincide, if $3$ does not divide the order of $G$. In section $4$, we have studied the nilpotency class of associated gyrogroup. We have proved that, if $3$ does not divide the order of the group $G$, then ${^\circ}G$ is a nilpotent loop of class $3$. In section $5$, we have studied the open problem for abelian inner mapping group for loop of class $3$ in case of associated gyrogroups. 

\section{Nuclei, Commutant and Center of the Associated Gyrogroup }
\n In this section, we prove some properties of the nuclei, commutant and the center of the associated gyrogroup. By \cite[Proposition 4.4]{lal2014weak}, the commutant $C(^{\circ}G)$ is the characteristic subgroup of the group $G$. We prove below that it is a normal subloop of $^{\circ}G$. 

	\begin{proposition}\label{sl}
		The commutant $C({^\circ}G)$ is a subloop of the loop ${^\circ}G$.
	\end{proposition}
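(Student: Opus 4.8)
The plan is to transfer the already-quoted fact that $H := C({^\circ}G)$ is a subgroup of the group $G$ (by \cite[Proposition 4.4]{lal2014weak}) to the $\circ$-operation, using the elementary observation that $x\circ y = y^{-1}xy^{2}\in\langle x,y\rangle$ for all $x,y\in G$. First I would note that the group identity $e$ lies in $H$ and that $H$ is closed under $\circ$: if $a,b\in H$, then $a\circ b = b^{-1}ab^{2}\in H$ since $H\le G$. Hence $(H,\circ)$ is a subgroupoid of ${^\circ}G$ containing the identity, and it remains only to check that it is a loop, i.e.\ that left and right $\circ$-translations restrict to bijections of $H$.

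For the right-loop property, I would observe that for $a,b\in H$ the equation $X\circ a=b$, i.e.\ $a^{-1}Xa^{2}=b$, has the unique solution $X=aba^{-1}$ in ${^\circ}G$, and this solution lies in $H$ because $H$ is a subgroup. For the remaining translation, the cleanest route uses finiteness: since $G$, and therefore $H$, is finite, it suffices to note that for each $a\in H$ the maps $x\mapsto a\circ x = x^{-1}ax^{2}$ and $x\mapsto x\circ a = a^{-1}xa^{2}$ carry $H$ into $H$ (again because $H\le G$) and are injective, being restrictions of the bijective left and right translations of the loop ${^\circ}G$; an injective self-map of a finite set is bijective. Thus both translations of $H$ are bijections, $(H,\circ)$ has a two-sided identity, and so $(H,\circ)$ is a loop, that is, a subloop of ${^\circ}G$. (If one prefers to avoid finiteness, one can instead use that ${^\circ}G$ is a gyrogroup, so $a\circ X=b$ has the unique solution $X=a^{-1}\circ b=b^{-1}a^{-1}b^{2}\in H$, the $\circ$-inverse of $a$ being its group inverse since $a\circ a^{-1}=a\cdot a\cdot a^{-2}=e=a^{-1}\circ a$.)

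I do not expect a genuine obstacle here: the whole content is the compatibility between the subgroup structure of $(G,\cdot)$ and the loop structure of $(G,\circ)$, and the only point requiring any care is to confirm that the solutions of $X\circ a=b$ and $a\circ X=b$ — equivalently, the one- and two-sided $\circ$-translates of elements of $H$ — remain inside $H$, which is immediate from $x\circ y\in\langle x,y\rangle$ together with $H$ being a subgroup. The same argument in fact shows more generally that every subgroup of $G$ is a subloop of ${^\circ}G$, and the proposition is the special case $H=C({^\circ}G)$.
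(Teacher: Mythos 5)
Your proof is correct, but it takes a genuinely different and more economical route than the paper. The paper works entirely from the characterization $a\in C({^\circ}G)\iff (ax)^{3}=a^{3}x^{3}$ for all $x$ (quoted from the proof of \cite[Proposition 4.4]{lal2014weak}): it verifies closure under $\circ$ by a chain of applications of this identity to show $((a\circ b)x)^{3}=(a\circ b)^{3}x^{3}$, and then exhibits $aba^{-2}$ as the solution of $X\circ a=b$. You instead lean on the other conclusion of that same cited proposition --- that $C({^\circ}G)$ is a (characteristic) subgroup of $(G,\cdot)$, a fact the paper itself states just before the proposition and uses freely later --- so that closure under $\circ$ and membership of all the relevant division solutions in $H=C({^\circ}G)$ follow at once from $x\circ y\in\langle x,y\rangle$. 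This is shorter, and it yields the stronger observation that \emph{every} subgroup of $G$ is a subloop of ${^\circ}G$. It is also arguably more complete: the paper checks only the right division $X\circ a=b$ and does not address $a\circ X=b$, whereas your finiteness argument (injective self-maps of a finite set are bijective, and finiteness of $G$ is a standing hypothesis of the paper) handles both translations cleanly. One small slip: the solution of $X\circ a=b$, i.e.\ of $a^{-1}Xa^{2}=b$, is $X=aba^{-2}$, not $aba^{-1}$ as you wrote ($a^{-1}(aba^{-1})a^{2}=ba$, not $b$); this does not affect your argument, since $aba^{-2}\in H$ for the same reason. Your parenthetical gyrogroup formula for left division is the least carefully justified part and is best omitted in favour of the finiteness argument you already give.
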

	\begin{proof}
		Let $a\in G$ be any element. Then, from the proof of the Proposition \cite[Proposition 4.4, p. 1450058-10]{lal2014weak}, we have,
		\begin{equation}
		a\in C(^{\circ}G) \iff (ax)^{3}= a^{3}x^{3}\;\; \forall x\in G. \label{s2e1}
		\end{equation}
		First, we prove that $C({^\circ}G)$ is closed under the binary operation $\circ$. Let $a, b \in C({^\circ}G)$. Then for all $x \in {^\circ}G$, we have
		\begin{eqnarray*}
			(a\circ b)^{3}x^{3} &=& (b^{-1}ab^{2})^{3}x^{3}\\
			&=& ((b^{-1}ab)b)^{3}x^{3}\\
			&=& ((b^{-1}ab)^{3}b^{3})x^{3}, (\textnormal{using (\ref{s2e1})}) \\
			&=& (b^{-1}ab)^{3}(bx)^{3}, (\textnormal{using (\ref{s2e1})}) \\
			&=& ((b^{-1}ab)(bx))^{3}, (\textnormal{using (\ref{s2e1})}) \\
			&=& (b^{-1}ab^{2}x)^{3}\\
			&=& ((a\circ b)x)^{3}.
		\end{eqnarray*}
		Therefore, $a\circ b \in C({^\circ}G)$.

		\vspace{.2cm}
		
		\n 	Let $a, b \in C({^\circ}G)$. Note that, $aba^{-2}\in C({^\circ}G)$ and	$aba^{-2}\circ a = a^{-1}aba^{-2}a^{2} = b$. Thus, for any two elements $a,b \in C({^\circ}G)$, the equation $X \circ a = b$ has a unique solution in $C({^\circ}G)$. Therefore, $C({^\circ}G)$ is a subloop of ${^\circ}G$.
	\end{proof}

\begin{proposition}
	The commutant $C({^\circ}G)$ is a normal subloop in ${^\circ}G$. 
\end{proposition}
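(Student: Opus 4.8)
The plan is to use the fact recalled just before Proposition~\ref{sl}, namely that $C({^\circ}G)$ is a characteristic — and hence \emph{normal} — subgroup of the group $G$, together with Proposition~\ref{sl}, which tells us $C({^\circ}G)$ is a subloop of ${^\circ}G$. So it suffices to prove the general statement: if $N$ is a subloop of ${^\circ}G$ which happens to be a normal subgroup of $G$, then $N$ is a normal subloop of ${^\circ}G$. I will then specialize $N=C({^\circ}G)$.

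First I would show that the $\circ$-cosets of such an $N$ coincide with the ordinary group cosets. For $x\in G$ and $n\in N$ we have $x\circ n=n^{-1}xn^{2}$; applying the quotient homomorphism $G\to G/N$ (legitimate since $N\trianglelefteq G$) kills $n$, so $x\circ n\equiv x\pmod N$, i.e. $x\circ N\subseteq xN$. Similarly $n\circ x=x^{-1}nx^{2}\equiv x\pmod N$, so $N\circ x\subseteq Nx$. Since every left translation $L^{\circ}_{x}$ and every right translation $R^{\circ}_{x}$ of the loop ${^\circ}G$ is a bijection and $G$ is finite, these inclusions are equalities: $x\circ N=xN=Nx=N\circ x$, which is exactly condition $(i)$ in the definition of a normal subloop.

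Conditions $(ii)$ and $(iii)$ I would verify by the same mod-$N$ computation. Using $(i)$, condition $(ii)$ reduces to the claim that $x\circ(yN)=\{\,m^{-1}xm^{2}\mid m\in yN\,\}$ equals $(x\circ y)N=(y^{-1}xy^{2})N$; but for $m\in yN$ we have $m\equiv y\pmod N$, hence $m^{-1}xm^{2}\equiv y^{-1}xy^{2}\pmod N$, which gives the inclusion $\subseteq$, an equality again because $m\mapsto x\circ m$ is injective on the finite set $yN$. Condition $(iii)$, $(N\circ x)\circ y=N\circ(x\circ y)$, follows identically from $y^{-1}my^{2}\equiv y^{-1}xy^{2}\pmod N$ for $m\in Nx$. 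Taking $N=C({^\circ}G)$ then completes the proof. The argument is essentially bookkeeping; the one point needing care is the passage from the inclusions to equalities, which rests on translations in a loop being bijective, and no property of $C({^\circ}G)$ is used beyond its being a subloop of ${^\circ}G$ and a normal subgroup of $G$.
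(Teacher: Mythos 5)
Your proof is correct, but it takes a genuinely different route from the paper. The paper verifies the three normality conditions by explicit commutator calculations inside $G$: for each $a\in C({^\circ}G)$ it exhibits a concrete $b\in C({^\circ}G)$ (a conjugate of $a$, e.g.\ $b=[x,y^{-1}]a[y^{-1},x]$) witnessing $a\circ(x\circ y)=(b\circ x)\circ y$ and the analogous identity for condition $(iii)$, using normality of $C({^\circ}G)$ in $G$ to keep $b$ inside the commutant. You instead isolate and prove the general lemma that any subloop of ${^\circ}G$ which is a normal subgroup of $G$ is automatically a normal subloop: under the quotient map $G\to G/N$ every $\circ$-product is congruent to the corresponding group product, so all the required set identities reduce to trivial congruences mod $N$, and the resulting inclusions become equalities because loop translations are injective and $G$ is finite (the paper's standing assumption). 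Your version is cleaner and strictly more general --- the same lemma immediately dispatches the later propositions that $N_{\lambda}({^\circ}G)$ and $N_{\mu}({^\circ}G)$ are normal subloops, since those are likewise characteristic subgroups of $G$ that are subloops of ${^\circ}G$ --- at the cost of being a counting argument that produces no explicit witnesses and that genuinely uses finiteness, whereas the paper's computation is finiteness-free (the map $a\mapsto[x,y^{-1}]a[x,y^{-1}]^{-1}$ is already a bijection of $C({^\circ}G)$). The one step in your argument that needed care, the passage from inclusion to equality of cosets, is handled correctly.
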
 
\begin{proof}
	\n $(i)$ Clearly, $x\circ C({^\circ}G) = C({^\circ}G)\circ x$ for all $x \in {^\circ}G$.
	
		\vspace{.2cm}
		
		\n $(ii)$ For all $a\in C({^\circ}G)$ and $x,y \in {^\circ}G$,
		
		\begin{eqnarray*}
			a\circ (x\circ y) &=& a\circ (y^{-1}xy^{2})\\
			&=& y^{-2}x^{-1}yay^{-1}xy^{2}y^{-1}xy^{2}\\
			&=& y^{-1}(y^{-1}x^{-1}yay^{-1}xy)xy^{2}\\
			&=& y^{-1}x^{-1}(xy^{-1}x^{-1}y)a(y^{-1}xyx^{-1})x^{2}y^{2}\\
			&=& y^{-1}x^{-1}[x,y^{-1}]a[y^{-1}, x]x^{2}y^{2}\\
			&=& y^{-1}(x^{-1}bx^{2})y^{2}\\
			&=& (b\circ x)\circ y.
		\end{eqnarray*}
		Thus, $a\circ (x\circ y) = (b\circ x)\circ y$, where $b= [x,y^{-1}]a[y^{-1}, x]$. Since $C({^\circ}G)$ is a normal subgroup of the group $G$, $b\in C({^\circ}G)$. Therefore, $C({^\circ}G)\circ (x\circ y) = (C({^\circ}G)\circ x)\circ y$, for all $x, y \in {^\circ}G$.
		
		\vspace{.2cm}
		
		\n $(iii)$ Using $(i)$ and $(ii)$, we have $x\circ (y\circ C({^\circ}G)) = (x\circ y)\circ C({^\circ}G) \iff x\circ (C({^\circ}G) \circ y) = C({^\circ}G)\circ (x\circ y) \iff x\circ (C({^\circ}G) \circ y) = (C({^\circ}G)\circ x)\circ y \iff x\circ (C({^\circ}G) \circ y) = (x\circ C({^\circ}G))\circ y$. Note that, for all $a\in C({^\circ}G)$ and $x\in {^\circ}G$, $(ax)^3=a^3x^3 \iff (ax)^{2} = x^{2}a^{2}$. Now, for all $a \in C({^\circ}G)$ and $x, y\in {^\circ}G$, we have,
		\begin{eqnarray*}
			x\circ (a\circ y)&=&x\circ (y^{-1}ay^{2})\\
			&=&  y^{-2}a^{-1}yxy^{-1}ayay^{2}\\
			&=& y^{-2}a^{-1}yxy^{-1}(ay)^{2}y\\
			&=& y^{-2}a^{-1}yxy^{-1}y^{2}a^{2}y\\
			&=& y^{-2}(a^{-1}yxya^{2})y\\
			&=& y^{-2}(yxy \circ a)y\\
			&=& y^{-2}(a\circ yxy)y\\
			&=& y^{-2}y^{-1}x^{-1}y^{-1}ayxyyxyy\\
			&=& y^{-3}x^{-1}y^{-1}ayxy^{2}xy^{2}\\
			&=& y^{-1}x^{-1}(xy^{-2}x^{-1}y^{-1})a(yxy^{2}x^{-1})x^{2}y^{2}\\
			&=& y^{-1}x^{-1}(yxy^{2}x^{-1})^{-1}a(yxy^{2}x^{-1})x^{2}y^{2}\\
			&=& y^{-1}(x^{-1}bx^{2})y^{2}\\
			&=& (b\circ x)\circ y.
		\end{eqnarray*}
		\n	Thus, $x\circ (a \circ y) = (x\circ b)\circ y$, where $b= (yxy^{2}x^{-1})^{-1}a(yxy^{2}x^{-1})$. Since $C({^\circ}G)$ is a normal subgroup of the group $G$, $b\in C({^\circ}G)$. Therefore, $x\circ (y\circ C({^\circ}G)) = (x\circ y)\circ C({^\circ}G)$ for all $x, y \in {^\circ}G$.
	
	\vspace{.2cm}
	
\n	Hence, $C({^\circ}G)$ is a normal subloop of the loop ${^\circ}G$.
\end{proof}
\begin{lemma}\label{l3}
	Let $G$ be a group. Then for all $x,y,z \in G$ following holds,
	\begin{enumerate}
		\item[($i$)] $[xy, z] = [x, [y,z]][y,z][x,z],$
		\item[($ii$)] $[x,yz] = [x,y][y,[x,z]][x,z]$.
	\end{enumerate}
\end{lemma}

\n Now, we will prove that the left, the middle and the right nuclei of $^{\circ}G$ are characteristic subgroups of the group $G$. First, note that, the center $Z(G)$ of the group $G$ is contained in the left, the middle and  the right nuclei of $^{\circ}G$.
	\begin{proposition}\label{ln}
	The left nucleus $N_{\lambda}({^\circ}G)$ is a subgroup of the group $G$. 
\end{proposition}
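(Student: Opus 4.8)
The plan is to pin down the left nucleus $N_{\lambda}({^\circ}G)$ by identifying it with an explicit centralizer inside $G$. The key preliminary computation is the formula for the map $f(x,y)$ of the right loop ${^\circ}G$. Writing out $u\circ x = x^{-1}ux^{2}$ and $x\circ y = y^{-1}xy^{2}$, and comparing the two sides of $(u\circ x)\circ y = f(x,y)(u)\circ(x\circ y)$, one solves for $f(x,y)(u)$ and, after a careful but routine rearrangement of the resulting group word together with the cancellations $y^{2}y^{-1}=y$ and $x^{2}x^{-1}=x$, obtains
\[
f(x,y)(u) = (y^{-1}xyx^{-1})\,u\,(y^{-1}xyx^{-1})^{-1}.
\]
In other words, $f(x,y)$ is conjugation (inside $G$) by the commutator $[y^{-1},x]=y^{-1}xyx^{-1}$. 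No nilpotency hypothesis is needed for this step.

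Next I would turn membership in the left nucleus into a fixed-point statement. By definition $a\in N_{\lambda}({^\circ}G)$ means $(a\circ x)\circ y = a\circ(x\circ y)$ for all $x,y\in G$; rewriting the left-hand side as $f(x,y)(a)\circ(x\circ y)$ via the defining property of $f$, and using that ${^\circ}G$ is a right loop so that the common right factor $x\circ y$ cancels, this is equivalent to $f(x,y)(a)=a$ for all $x,y\in G$. By the formula above, this says precisely that $a$ commutes with $y^{-1}xyx^{-1}$ for every $x,y\in G$.

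To finish, note that as $x,y$ range over $G$ the elements $y^{-1}xyx^{-1}=[y^{-1},x]$ run over the whole set of commutators of $G$, which generates $[G,G]$, and the centralizer of a subset of $G$ equals the centralizer of the subgroup it generates. Hence
\[
N_{\lambda}({^\circ}G) = C_{G}\bigl([G,G]\bigr),
\]
which is visibly a subgroup of $G$. Since $[G,G]$ is characteristic in $G$, so is this centralizer, which also recovers the already-noted inclusion $Z(G)\subseteq N_{\lambda}({^\circ}G)$ (and, for $G$ of class $3$, the inclusion $[G,G]\subseteq N_{\lambda}({^\circ}G)$, because then $[G,G]$ is abelian).

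I expect the only slightly delicate point to be the word manipulation producing the formula for $f(x,y)$; once that is in hand, the rest is formal, and in particular this route bypasses the commutator identities of Lemma~\ref{l3}. One should, however, be careful that the cancellation invoked in the second paragraph is exactly the right-cancellativity guaranteed by the right-loop axiom for ${^\circ}G$ (the equation $X\circ a=b$ having the unique solution $X=aba^{-2}$), rather than left-cancellation.
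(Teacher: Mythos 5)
Your proof is correct, and it arrives at the same pivot as the paper --- the characterization of $N_{\lambda}({^\circ}G)$ as the set of elements of $G$ commuting with every commutator --- but by a different route and with a cleaner finish. Your formula checks out: expanding $(u\circ x)\circ y=f(x,y)(u)\circ(x\circ y)$ does give $f(x,y)(u)=(y^{-1}xyx^{-1})\,u\,(y^{-1}xyx^{-1})^{-1}$, so $f(x,y)$ is conjugation by $[y^{-1},x]$, and right-cancellativity turns membership in $N_{\lambda}({^\circ}G)$ into the fixed-point condition $f(x,y)(a)=a$, i.e.\ $a$ centralizes $[y^{-1},x]$ for all $x,y$. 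The paper reaches the equivalent condition $[a,[x,y^{-1}]]=1$ by directly expanding and cancelling, so up to packaging this step is the same computation. The genuine divergence is at the end: the paper verifies closure under products and inverses by hand, using Lemma \ref{l3}$(i)$ together with $[G,G,G]\subseteq Z(G)$, whereas you observe that the set in question is exactly the centralizer $C_{G}([G,G])$ (since the commutators generate $[G,G]$ and centralizing a set is the same as centralizing the subgroup it generates), which is a subgroup with no further work. Your version therefore needs no hypothesis on the nilpotency class of $G$, and the identification $N_{\lambda}({^\circ}G)=C_{G}([G,G])$ yields as free corollaries that $N_{\lambda}({^\circ}G)$ is characteristic in $G$ (part of Proposition \ref{nlpt} in the paper) and that $Z(G)\subseteq N_{\lambda}({^\circ}G)$. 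The one delicate point --- that the cancellation of the common right factor $x\circ y$ rests on the right-loop axiom rather than on left-cancellation --- is one you flag and handle correctly.
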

\begin{proof}
	Let $x,y\in {^\circ}G$. Then $a\in N_{\lambda}({^\circ}G)$
	
	$\iff a\circ (x\circ y) = (a\circ x)\circ y$
	
	$\iff a\circ (y^{-1}xy^{2}) = y^{-1}(a\circ x)y^{2}$
	
	$\iff (y^{-1}xy^{2})^{-1}a(y^{-1}xy^{2})^{2} = y^{-1}x^{-1}ax^{2}y^{2}$
	
	$\iff y^{-2}x^{-1}yay^{-1}xy^{2}y^{-1}xy^{2} = y^{-1}x^{-1}ax^{2}y^{2}$
	
	$\iff y^{-1}x^{-1}yay^{-1}xy = x^{-1}ax$
	
	$\iff xy^{-1}x^{-1}ya = axy^{-1}x^{-1}y$
	
	$\iff [x,y^{-1}]a = a[x,y^{-1}]$
	
	$\iff [a,[x,y^{-1}]] = 1$.
	
	\vspace{.2cm}
	
	\n Now, let $a,b\in N_{\lambda}({^\circ}G)$. Then, using the Lemma \ref{l3} $(i)$, 	
	\begin{eqnarray*}
	[ab, [x,y^{-1}]] &=& [a, [b,[x,y^{-1}]]][b, [x,y^{-1}]][a, [x,y^{-1}]] \\
	&=& 1\; (\text{for } [G,G,G]\subseteq Z(G)).
	\end{eqnarray*}
	
	\n Thus $ab\in N_{\lambda}({^\circ}G)$.  Also, for $a\in N_{\lambda}({^\circ}G)$ and $x,y\in {^\circ}G $, 
	
	\begin{align*}
	1= & [1, [x, y^{-1}]] \\
	 = & [aa^{-1},[x,y^{-1}]] \\
	 = & [a, [a^{-1},[x,y^{-1}]]][a^{-1},[x,y^{-1}]][a,[x,y^{-1}]] \\
	 = & [a^{-1},[x,y^{-1}]].
	\end{align*}
	
	\n Therefore, $a^{-1}\in N_{\lambda}({^\circ}G)$. Hence, $N_{\lambda}({^\circ}G)$ is a subgroup of the group $G$. 
	\end{proof}
\begin{proposition}
	The middle nucleus $N_{\mu}({^\circ}G)$ is a subgroup of $G$.
\end{proposition}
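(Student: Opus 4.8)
The plan is to mimic the proof of Proposition~\ref{ln}: reduce the defining identity of the middle nucleus to the statement that a single commutator is central, and then establish closure under $\circ$ and under inverses using Lemma~\ref{l3}$(i)$ together with the standing hypothesis $[G,G,G]\subseteq Z(G)$.

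First I would expand $(x\circ a)\circ y = x\circ(a\circ y)$ using $u\circ v=v^{-1}uv^{2}$. The left side equals $y^{-1}a^{-1}xa^{2}y^{2}$ and the right side equals $y^{-2}a^{-1}yxy^{-1}ayay^{2}$. Cancelling the trailing $y^{2}$ and multiplying by $y$ on the left, the identity becomes $a^{-1}xa^{2}=(y^{-1}ay)^{-1}x(y^{-1}ay)\,a$ for all $x\in G$; cancelling $a$ on the right and conjugating isolates the condition $(y^{-1}aya^{-1})\,x=x\,(y^{-1}aya^{-1})$ for every $x\in G$. Recognizing $y^{-1}aya^{-1}=[y,a^{-1}]$, this yields
\[
a\in N_{\mu}({^\circ}G)\iff [a^{-1},y]\in Z(G)\ \text{ for all }\ y\in {^\circ}G,
\]
equivalently $\bigl[[a^{-1},y],x\bigr]=1$ for all $x,y\in {^\circ}G$. (In particular $N_{\mu}({^\circ}G)$ is exactly the second centre $Z_{2}(G)$, which already makes the claim plausible; but the point is to derive it directly, in parallel with Proposition~\ref{ln}.)

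Given this characterization, closure follows from Lemma~\ref{l3}$(i)$. If $a,b\in N_{\mu}({^\circ}G)$, then, applying the lemma to $[b^{-1}a^{-1},y]$,
\[
[b^{-1}a^{-1},y]=\bigl[b^{-1},[a^{-1},y]\bigr]\,[a^{-1},y]\,[b^{-1},y];
\]
the first factor is trivial because $[a^{-1},y]\in Z(G)$, and the other two factors lie in $Z(G)$ since $a,b\in N_{\mu}({^\circ}G)$, so $(ab)^{-1}=b^{-1}a^{-1}$ witnesses $ab\in N_{\mu}({^\circ}G)$. For inverses, expanding $1=[aa^{-1},y]$ by Lemma~\ref{l3}$(i)$ and using $[a^{-1},y]\in Z(G)$ gives $[a,y]=[a^{-1},y]^{-1}\in Z(G)$ for all $y$, hence $a^{-1}\in N_{\mu}({^\circ}G)$. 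Thus $N_{\mu}({^\circ}G)$ is a subgroup (indeed a characteristic subgroup) of $G$.

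The only genuinely delicate step is the reduction in the second paragraph: the cancellations must be carried through accurately, and the conjugation used there is legitimate precisely because the defining identity is required to hold for \emph{all} $x\in G$. Once the condition ``$[a^{-1},y]\in Z(G)$ for all $y$'' is isolated, the remainder is a verbatim copy of the bookkeeping already done in Proposition~\ref{ln}.
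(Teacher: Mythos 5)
Your proof is correct and follows essentially the same route as the paper: both reduce membership in $N_{\mu}({^\circ}G)$ to the condition that the commutator of $a$ with every $y$ lies in $Z(G)$, and then obtain closure under products and inverses from Lemma \ref{l3}$(i)$ together with $[G,G,G]\subseteq Z(G)$ (the paper simply defers the reduction to ``the similar argument as in Proposition \ref{ln}'', which you carry out explicitly). The only blemish is the identification $y^{-1}aya^{-1}=[y,a^{-1}]$, which under the paper's convention $[u,v]=uvu^{-1}v^{-1}$ should read $[y^{-1},a]$; this is harmless, since centrality of that element for all $y$ is equivalent to your stated condition $[a^{-1},y]\in Z(G)$ for all $y$.
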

\begin{proof}
	Let $x,y\in {^\circ}G$. Then, by the similar argument as in the proof of Proposition \ref{ln}, 
	\[a\in N_{\mu}({^\circ}G)\iff [x,[a,y^{-1}]] = 1.\]	
	
	\n Now, for all $a,b\in N_{\mu}({^\circ}G)$, by the similar argument as in the proof of Proposition \ref{ln}, using the Lemma \ref{l3} $(i)$, we have, $[x,[ab,y^{-1}]] = 1$ and $[x, [a^{-1},y^{-1}]] = 1$. Thus $a^{-1}, ab\in N_{\mu}({^\circ}G)$. Hence, $N_{\mu}({^\circ}G)$ is a subgroup of $G$.	
\end{proof}
\begin{proposition}
	The right nucleus $N_{\rho}({^\circ}G)$ is a subgroup of $G$.
\end{proposition}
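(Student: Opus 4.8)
The plan is to follow the template of Proposition~\ref{ln} and the preceding proposition on $N_{\mu}({^\circ}G)$: first reduce membership in $N_{\rho}({^\circ}G)$ to a commutator identity, and then close up under products and inverses using Lemma~\ref{l3} and the hypothesis $[G,G,G]\subseteq Z(G)$.

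First I would unwind the defining condition. For $a\in{^\circ}G$ one has $a\in N_{\rho}({^\circ}G)$ if and only if $(x\circ y)\circ a=x\circ(y\circ a)$ for all $x,y\in{^\circ}G$. Expanding through $u\circ v=v^{-1}uv^{2}$, the left-hand side is $a^{-1}y^{-1}xy^{2}a^{2}$, while the right-hand side is $(y\circ a)^{-1}x(y\circ a)^{2}$ with $y\circ a=a^{-1}ya^{2}$, hence equals $a^{-2}y^{-1}axa^{-1}yaya^{2}$. Cancelling $a^{2}$ on the right, then multiplying on the left by $a^{2}$ and by $y$ and on the right by $y^{-1}$, the equation collapses to $yay^{-1}xy=axa^{-1}ya$; rewriting $yay^{-1}=[y,a]a$ and $a^{-1}ya=[a^{-1},y]y$ and cancelling one more $y$, this becomes
\[(ax)^{-1}[y,a](ax)=[a^{-1},y]\qquad\text{for all }x,y\in{^\circ}G.\]
Since the right-hand side is independent of $x$ and, by the identity $[a^{-1},y]=a^{-1}[y,a]a$, letting $x$ run over $G$ (so that $ax$ runs over $G$) forces $[y,a]$ to be fixed by every conjugation, i.e.\ $[y,a]\in Z(G)$; conversely, if $[y,a]\in Z(G)$ the same identity shows the equation holds for every $x$. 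Therefore
\[a\in N_{\rho}({^\circ}G)\iff [y,a]\in Z(G)\text{ for all }y\in G\iff [x,[y,a]]=1\text{ for all }x,y\in{^\circ}G.\]
In particular, since $Z(G)\trianglelefteq G$, this is exactly the condition characterising $N_{\mu}({^\circ}G)$, so in fact $N_{\rho}({^\circ}G)=N_{\mu}({^\circ}G)$ (both equal the second centre $\{a\in G:[a,G]\subseteq Z(G)\}$) and the previous proposition already applies; for completeness I would still spell out the closure argument below.

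For closure, let $a,b\in N_{\rho}({^\circ}G)$, so that $[y,a],[y,b]\in Z(G)$ for every $y\in G$. By Lemma~\ref{l3}$(ii)$ we have $[y,ab]=[y,a][a,[y,b]][y,b]$; here $[a,[y,b]]\in[G,G,G]\subseteq Z(G)$, so $[y,ab]$ is a product of central elements and hence central, giving $ab\in N_{\rho}({^\circ}G)$. For inverses, the identity $[y,a^{-1}]=a^{-1}[a,y]a$ together with $[a,y]=[y,a]^{-1}\in Z(G)$ yields $[y,a^{-1}]=[a,y]\in Z(G)$, so $a^{-1}\in N_{\rho}({^\circ}G)$. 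Hence $N_{\rho}({^\circ}G)$ is a subgroup of $G$ (and, being the second centre, a characteristic one).

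I expect the main obstacle to be purely computational: carrying out the cancellations that turn $(x\circ y)\circ a=x\circ(y\circ a)$ into the tidy identity $(ax)^{-1}[y,a](ax)=[a^{-1},y]$, and then noticing that quantifying over $x$ is what pins $[y,a]$ into $Z(G)$. Once the commutator characterisation is available, the closure step is a one-line appeal to Lemma~\ref{l3} and the class-$3$ hypothesis, exactly as for $N_{\lambda}$ and $N_{\mu}$. One could instead cite the general loop-theoretic fact that the nuclei of any loop are subgroups, but that would only give a subloop of ${^\circ}G$, not the sharper statement that $N_{\rho}({^\circ}G)$ is a subgroup of the group $G$, which is precisely what the explicit description provides.
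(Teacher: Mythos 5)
Your proof is correct and follows essentially the same route as the paper: reduce membership in $N_{\rho}({^\circ}G)$ to the condition that $[x,[y,a]]=1$ for all $x,y$ (the paper states this with $a^{-1}$ in place of $a$, which is equivalent since $[y,a^{-1}]$ and $[y,a]^{-1}$ differ by a central element), and then close under products and inverses via Lemma~\ref{l3} and $[G,G,G]\subseteq Z(G)$. You additionally observe en route that this condition coincides with the one for $N_{\mu}({^\circ}G)$, which simply anticipates Proposition~\ref{lmrn}$(i)$; otherwise you are filling in exactly the computation the paper elides with ``by the similar argument.''
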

\begin{proof}
	Let $x,y\in {^\circ}G$. Then, by the similar argument as in the proof of Proposition \ref{ln},
	\[a\in N_{\rho}({^\circ}G)\iff [x,[y,a^{-1}]] = 1.\]

	\n By the similar argument as in the proof of Proposition \ref{ln}, one can show that $N_{\rho}({^\circ}G)$ is a subgroup of $G$. 
\end{proof}
\begin{proposition}\label{nlpt}
	$N_{i}({^\circ}G)$, where $i\in \{\lambda, \mu, \rho\}$ are characteristic subgroups of the group $G$ and are of nilpotency class atmost 2. 
\end{proposition}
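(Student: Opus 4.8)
The plan is to extract everything from the commutator characterizations of the three nuclei already obtained inside the proofs of the preceding propositions, namely that $a\in N_{\lambda}({^\circ}G)$, $a\in N_{\mu}({^\circ}G)$ and $a\in N_{\rho}({^\circ}G)$ are equivalent, respectively, to $[a,[x,y^{-1}]]=1$, to $[x,[a,y^{-1}]]=1$ and to $[x,[y,a^{-1}]]=1$ holding for all $x,y\in G$. Once these conditions are rewritten, each nucleus is seen to be a familiar characteristic subgroup of $G$, and both assertions then follow immediately.

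First I would identify the nuclei. Since $y\mapsto y^{-1}$ permutes $G$, the elements $[x,y^{-1}]$, as $x,y$ run over $G$, are exactly all the commutators of $G$, and these generate $G'=[G,G]$; as $C_{G}(S)=C_{G}(\langle S\rangle)$ for any subset $S\subseteq G$, the condition defining $N_{\lambda}({^\circ}G)$ says precisely $a\in C_{G}(G')$, the centralizer of $G'$ in $G$. Next, $[x,[a,y^{-1}]]=1$ for all $x$ is the same as $[a,y^{-1}]\in Z(G)$, and letting $y$ range over $G$ this becomes $[a,G]\subseteq Z(G)$; hence $N_{\mu}({^\circ}G)=\{a\in G:[a,G]\subseteq Z(G)\}=Z_{2}(G)$, the second centre of $G$. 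Finally, $[x,[y,a^{-1}]]=1$ for all $x$ means $[y,a^{-1}]\in Z(G)$, equivalently $[a^{-1},y]\in Z(G)$, for every $y$; thus $a\in N_{\rho}({^\circ}G)$ iff $a^{-1}\in Z_{2}(G)$, and since $Z_{2}(G)$ is a subgroup this gives $N_{\rho}({^\circ}G)=Z_{2}(G)=N_{\mu}({^\circ}G)$. Since $G'$ and $Z_{2}(G)$ are characteristic in $G$ and the centralizer of a characteristic subgroup is again characteristic, all three nuclei are characteristic subgroups of $G$.

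For the nilpotency class it suffices to show $[N,N,N]=1$ for each nucleus $N$. If $a,b,c\in N_{\lambda}({^\circ}G)=C_{G}(G')$ then $[a,b]\in G'$ while $c$ centralizes $G'$, so $[[a,b],c]=1$. If $a,b,c\in Z_{2}(G)$ then $[a,b]\in[Z_{2}(G),G]\subseteq Z(G)$ by the definition of the second centre, so again $[[a,b],c]=1$. Hence $[N_{i}({^\circ}G),N_{i}({^\circ}G),N_{i}({^\circ}G)]=1$ and each $N_{i}({^\circ}G)$ has nilpotency class at most $2$. There is no genuine obstacle: the only care needed is the bookkeeping with the substitutions $y\mapsto y^{-1}$ and the elementary fact that an element of $G$ centralizing every commutator centralizes all of $G'$. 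If one prefers not to name $C_{G}(G')$ and $Z_{2}(G)$, the same computations show directly that the defining condition of each $N_{i}({^\circ}G)$ is invariant under every $\phi\in\operatorname{Aut}(G)$ — writing $x=\phi(u)$, $y=\phi(v)$ and pushing $\phi$ through the commutators, using that $\phi$ is surjective — and that the triple commutator of nucleus elements vanishes, which is exactly the content of the proposition.
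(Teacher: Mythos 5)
Your proof is correct, and for the ``characteristic'' half it takes a genuinely different route from the paper. The paper never names the nuclei: it proves $N_{\lambda}({}^{\circ}G)$ is characteristic by a direct computation, taking $\psi\in \operatorname{Aut}(G)$, writing $x=\psi(u)$, $y=\psi(v)$, and pushing $\psi$ through the defining identity $(a\circ x)\circ y=a\circ(x\circ y)$ (this part of the paper's argument does not even use the commutator characterization), and then says ``similarly'' for the other two nuclei. You instead use the characterizations $[a,[x,y^{-1}]]=1$, $[x,[a,y^{-1}]]=1$, $[x,[y,a^{-1}]]=1$ to identify the nuclei outright as $N_{\lambda}({}^{\circ}G)=C_{G}(G')$ and $N_{\mu}({}^{\circ}G)=N_{\rho}({}^{\circ}G)=Z_{2}(G)$, after which being characteristic is immediate from standard facts ($G'$ and $Z(G)$ are characteristic, centralizers of characteristic subgroups are characteristic). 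This is cleaner and more informative -- it also recovers Proposition 2.7$(i)$ ($N_\mu=N_\rho$) for free -- at the cost of leaning on the equivalences established inside the earlier proofs, which is legitimate since those propositions precede this one. For the nilpotency-class half, your argument (the triple commutator of nucleus elements vanishes because the defining condition can be applied with nucleus elements in all slots, or because $[Z_{2}(G),G]\subseteq Z(G)$) is essentially the same observation the paper makes when it notes $[N_{\lambda}({}^{\circ}G),N_{\lambda}({}^{\circ}G),N_{\lambda}({}^{\circ}G)]=\{1\}$; you just spell it out more explicitly.
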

\begin{proof}
	Let $\psi \in Aut(G,\cdot)$ and $a\in N_{\lambda}({^\circ}G)$. Then for all $x,y\in G$, we have
	\begin{eqnarray*}
		(\psi(a)\circ x)\circ y &=& (\psi(a)\circ \psi(u))\circ \psi(v), \text{where}\; x=\psi(u),\; \text{and}\; y=\psi(v)\\
		&=& \psi(v)^{-1}(\psi(u)^{-1}\psi(a)\psi(u)^{2})\psi(v)^{2}\\
		&=&\psi(v^{-1}(u^{-1}au^{2})v^{2})\\
		&=& \psi((a\circ u)\circ v)\\
		&=& \psi(a\circ(u\circ v))\\
		&=& \psi(a\circ(v^{-1}uv^{2}))\\
		&=& \psi((v^{-1}uv^{2})^{-1}a(v^{-1}uv^{2})^{2})\\
		&=& \psi(v^{-1}uv^{2})^{-1}\psi(a)\psi(v^{-1}uv^{2})^{2}\\
		&=& (\psi(v)^{-1}\psi(u)\psi(v)^{2})^{-1}\psi(a)(\psi(v)^{-1}\psi(u)\psi(v)^{2})^{2}\\
		&=& (\psi(u)\circ \psi(v))^{-1}\psi(a)(\psi(u)\circ \psi(v))^{2}\\
		&=& \psi(a)\circ (\psi(u)\circ \psi(v))\\
		&=& \psi(a)\circ (x\circ y).	
	\end{eqnarray*}
\n Thus, $\psi(a)\in N_{\lambda}({^\circ}G)$. Hence, $N_{\lambda}({^\circ}G)$ is a characteristic subgroup of $G$.	Also, by the proof of Proposition \ref{ln}, one notes that, $[N_{\lambda}({^\circ}G), N_{\lambda}({^\circ}G),$ $N_{\lambda}({^\circ}G)] = \{1\}$. Hence, $N_{\lambda}({^\circ}G)$ is a nilpotent group of class atmost 2.	By the similar argument, $N_{\mu}({^\circ}G)$ and $N_{\rho}({^\circ}G)$ are characteristic subgroups of the group $G$ of nilpotency class atmost 2.
\end{proof}
\begin{corollary}\label{nug}
	$(N_{i}({^\circ}G), \circ)$, where $i\in \{\lambda, \mu, \rho\}$ are groups with the induced binary operation $\circ$.
\end{corollary}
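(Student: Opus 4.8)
The plan is to deduce this immediately from Proposition~\ref{nlpt} together with the Foguel--Ungar criterion recalled in the introduction (\cite[Theorem 3.6]{foguel2001gyrogroups}). First I would record the general fact that whenever $H$ is a subgroup of $G$, the subset $H$ is closed under $\circ$, since $x\circ y=y^{-1}xy^{2}\in H$ for $x,y\in H$, and the equation $X\circ a=b$ has its unique solution $X=aba^{-2}$ lying inside $H$; thus $(H,\circ)$ is precisely the associated right loop ${^\circ}H$ of the group $H$, the operation being given by the very same formula.

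Next I would apply this with $H=N_{i}({^\circ}G)$ for $i\in\{\lambda,\mu,\rho\}$. By Proposition~\ref{nlpt}, $N_{i}({^\circ}G)$ is a subgroup of $G$ whose nilpotency class is at most $2$. If $N_{i}({^\circ}G)$ is abelian, then $x\circ y=y^{-1}xy^{2}=xy$ on it, so $(N_{i}({^\circ}G),\circ)$ is just the group $(N_{i}({^\circ}G),\cdot)$ and there is nothing more to check. If its nilpotency class is exactly $2$, then \cite[Theorem 3.6]{foguel2001gyrogroups} applies to the group $N_{i}({^\circ}G)$ and yields that its associated right gyrogroup ${^\circ}\bigl(N_{i}({^\circ}G)\bigr)=(N_{i}({^\circ}G),\circ)$ is a group. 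Either way the corollary follows.

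I do not expect a genuine obstacle here: the entire content of the statement is already carried by Proposition~\ref{nlpt} and the class-$2$ case of \cite[Theorem 3.6]{foguel2001gyrogroups}. The only point that needs a word of care is the identification of the operations --- one must observe that restricting $\circ$ from ${^\circ}G$ to the subgroup $N_{i}({^\circ}G)$ gives exactly the binary operation to which the Foguel--Ungar theorem is applied, which is clear since both are $x\circ y=y^{-1}xy^{2}$. A minor bookkeeping issue is the abelian edge case, which is why I would dispose of it separately as above rather than quoting the theorem blindly.
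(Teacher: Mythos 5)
Your proposal is correct and follows essentially the same route as the paper, whose entire proof is the one-line observation that the claim follows from Proposition~\ref{nlpt} together with \cite[Theorem 3.6]{foguel2001gyrogroups}. The extra details you supply --- checking that the restriction of $\circ$ to the subgroup $N_{i}({^\circ}G)$ coincides with the associated operation of that subgroup, and disposing of the abelian case separately --- are sensible elaborations of the same argument rather than a different approach.
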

\begin{proof}
	Follows immediately by the Propositions \ref{nlpt} and the Theorem \cite[Theorem 3.6]{foguel2001gyrogroups}. 
\end{proof}
\begin{proposition}\label{lmrn}
The following relations hold between the nuclei of $^{\circ}G$:
	\begin{itemize}
		\item[$(i)$] $N_{\mu}({^\circ}G) = N_{\rho}({^\circ}G),$
		\item[($ii$)] $N_{\mu}({^\circ}G)\subseteq N_{\lambda}({^\circ}G)$.
	\end{itemize}
\end{proposition}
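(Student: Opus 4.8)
The plan is to replace each of the three nucleus conditions by a transparent group-theoretic description, using the commutator characterizations obtained in Proposition~\ref{ln} and in the two propositions following it, and then to compare these descriptions. Recall that for $a\in G$ one has $a\in N_{\lambda}({}^{\circ}G)\iff[a,[x,y^{-1}]]=1$ for all $x,y\in G$; $a\in N_{\mu}({}^{\circ}G)\iff[x,[a,y^{-1}]]=1$ for all $x,y\in G$; and $a\in N_{\rho}({}^{\circ}G)\iff[x,[y,a^{-1}]]=1$ for all $x,y\in G$.

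For part $(i)$, I would argue that $N_{\mu}({}^{\circ}G)$ and $N_{\rho}({}^{\circ}G)$ are each equal to $\{a\in G:[a,G]\subseteq Z(G)\}$. For $N_{\mu}$: since $y\mapsto y^{-1}$ is a bijection of $G$, the defining condition reads $[x,[a,z]]=1$ for all $x,z\in G$; and $[x,w]=1$ for all $x\in G$ is equivalent to $w\in Z(G)$, so the condition becomes $[a,z]\in Z(G)$ for all $z$, i.e. $[a,G]\subseteq Z(G)$. For $N_{\rho}$: fixing $a$ and letting $x$ range over $G$, the condition $[x,[y,a^{-1}]]=1$ forces $[y,a^{-1}]\in Z(G)$; letting $y$ range over $G$ this says $[G,a^{-1}]\subseteq Z(G)$, and since $\{b\in G:[b,G]\subseteq Z(G)\}$ is a subgroup this is the same as $[a,G]\subseteq Z(G)$. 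Hence $N_{\mu}({}^{\circ}G)=N_{\rho}({}^{\circ}G)$.

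For part $(ii)$, the same reasoning shows that $a\in N_{\lambda}({}^{\circ}G)$ if and only if $a$ commutes with every commutator $[x,z]$ with $x,z\in G$; equivalently, since commutators generate $[G,G]$, if and only if $a\in C_{G}([G,G])$. By part $(i)$ it therefore suffices to check that $[a,G]\subseteq Z(G)$ implies $[a,c]=1$ for every $c\in[G,G]$. But when $[a,G]\subseteq Z(G)$ the map $g\mapsto[a,g]$ has central values, so the usual expansion of $[a,gh]$ collapses to $[a,g][a,h]$; thus $g\mapsto[a,g]$ is a homomorphism from $G$ into the abelian group $Z(G)$, and as such it is trivial on $[G,G]$. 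This gives $[a,[x,y^{-1}]]=1$ for all $x,y$, i.e. $a\in N_{\lambda}({}^{\circ}G)$, so $N_{\mu}({}^{\circ}G)\subseteq N_{\lambda}({}^{\circ}G)$. (One may alternatively verify $[[G,G],a]=1$ directly from $[G,a,G]=[a,G,G]=1$ via the three subgroup lemma.)

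I do not expect a serious obstacle here; the entire content is the two reductions ``$N_{\mu}=N_{\rho}=\{a:[a,G]\subseteq Z(G)\}$'' and ``$N_{\lambda}=C_{G}([G,G])$''. The only place that needs a little care is the handling of the $y^{-1}$'s: one uses that a one-sided commutator $[x,w]$ (resp. $[w,x]$) vanishing for all $x$ forces $w\in Z(G)$, and --- if one prefers to stay at the level of explicit commutator identities, as elsewhere in the paper --- that replacing the inner argument of an outer commutator by a congruent element modulo $[G,G,G]\subseteq Z(G)$ does not change the outer commutator. Both are routine.
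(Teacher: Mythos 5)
Your proof is correct, but it takes a genuinely different route from the paper's. Starting from the same commutator characterizations of the three nuclei, you identify each nucleus with a standard subgroup: $N_{\mu}({}^{\circ}G)=N_{\rho}({}^{\circ}G)=\{a:[a,G]\subseteq Z(G)\}=Z_2(G)$, the second centre, and $N_{\lambda}({}^{\circ}G)=C_G([G,G])$; part $(i)$ then falls out of the symmetry $[y,a^{-1}]=[a^{-1},y]^{-1}$ together with $Z_2(G)$ being a subgroup, and part $(ii)$ becomes the classical inclusion $Z_2(G)\subseteq C_G([G,G])$, which you justify correctly either by noting that central values make $g\mapsto[a,g]$ a homomorphism into $Z(G)$ (hence trivial on $[G,G]$) or by the three subgroup lemma. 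The paper instead stays at the level of explicit commutator manipulation: for $(i)$ it uses $[x,[a,y^{-1}]]=[x,[y^{-1},a]]^{-1}$ (valid because $[G,G,G]\subseteq Z(G)$) to pass between the $N_{\mu}$ and $N_{\rho}$ conditions, and for $(ii)$ it carries out a lengthy direct computation rewriting $[a,[x,y]]$ as $[[y,a],x]$ and hence as $[x,[a,y]]^{-1}=1$. Your version is shorter, more conceptual, and makes visible that part $(ii)$ is a completely general group-theoretic fact not tied to the class-$3$ hypothesis; the paper's version has the merit of being self-contained at the level of the identities it has already set up and of not invoking the second centre or the three subgroup lemma. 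Both arguments are sound.
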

\begin{proof}	
	\n $(i)$ Let $a\in N_{\rho}({^\circ}G)$. Then, for all $x,y\in {^\circ}G,$
		\begin{eqnarray*}
			[x,[a,y^{-1}]] &=& [x,[y^{-1},a]^{-1}]\\
			&=& [x,[y^{-1},a]]^{-1},\; \text{as,} \; [G,G,G]\subseteq Z(G)\\
			&=& 1, \; \text{as}\; a\in N_{\rho}({^\circ}G).
			\end{eqnarray*}
			
	\n	Thus, $[x,[a,y^{-1}]] = 1$, for all $x,y\in {^\circ}G$. Therefore, $a\in N_{\mu}({^\circ}G)$ and $N_{\rho}({^\circ}G)\subseteq N_{\mu}({^\circ}G)$.\\
	
\n	Conversely, let $a\in N_{\mu}({^\circ}G)$. Then, for all $x,y\in {^\circ}G$,
\begin{eqnarray*}
[x,[y,a^{-1}]] &=& [x, [a^{-1}, y]^{-1}]\\
&=& [x, [a^{-1},y]]^{-1}\; \text{as,} \; [G,G,G]\subseteq Z(G)\\
&=& 1, \; \text{as}\; a^{-1}\in N_{\mu}({^\circ}G).
\end{eqnarray*}
\n Therefore, $a\in N_{\rho}({^\circ}G)$ and $N_{\mu}({^\circ}G)\subseteq N_{\rho}({^\circ}G)$.
Hence, $N_{\mu}({^\circ}G) = N_{\rho}({^\circ}G)$.

\vspace{0.2 cm}

		\n $(ii)$ Let $a\in N_{\mu}({^\circ}G)$. Then, for all $x,y\in {^\circ}G$,
		\begin{align*}
			[a,[x,y]] =& a[x,y]a^{-1}[x,y]^{-1}\\
			=& a[y,a^{-1}]^{-1}([y,a^{-1}][x,y]a^{-1}yx)y^{-1}x^{-1}\\
			=& a[a^{-1},y]([x,y][y,a^{-1}]a^{-1}yx)y^{-1}x^{-1} (\text{ for } [G,G] \text{ is abelian})\\
			=& yay^{-1}(xyx^{-1}y^{-1}ya^{-1}x)y^{-1}x^{-1}\\
			=&  yay^{-1}a^{-1}x(x^{-1}axyx^{-1}a^{-1}x)y^{-1}x^{-1}\\
			=& yay^{-1}a^{-1}x^{2}(x^{-2}axyx^{-1}a^{-1}x^{2}x^{-1}a^{-1}x^{2})x^{-2}ay^{-1}x^{-1}\\
			=& yay^{-1}a^{-1}x^{2}((x^{-1}a^{-1}x^{2})^{-1}y(x^{-1}a^{-1}x^{2})^{2})x^{-2}ay^{-1}x^{-1}\\
			=& yay^{-1}a^{-1}x^{2}(y\circ (a^{-1}\circ x))x^{-2}ay^{-1}x^{-1}\\
			=& yay^{-1}a^{-1}x^{2}((y\circ a^{-1})\circ x)x^{-2}ay^{-1}x^{-1}\\
			=& yay^{-1}a^{-1}x^{2}(x^{-1}aya^{-2}x^{2})x^{-2}ay^{-1}x^{-1}\\
			=& yay^{-1}a^{-1}xaya^{-1}y^{-1}x^{-1}\\
			=& [y,a]x[a,y]x^{-1}\\
			=& [[y,a],x]\\
			=& [[a,y]^{-1},x]\\
			=& [[a,y],x]^{-1}, \text{as}\; [G,G,G]\subseteq Z(G)\\
			=& [x,[a,y]]\\
			=& 1, \text{as}\; a\in N_{\mu}({^\circ}G).
		\end{align*}
		\n Hence, $a\in N_{\lambda}({^\circ}G)$. Therefore, $N_{\mu}({^\circ}G)\subseteq N_{\lambda}({^\circ}G)$.
			
		\end{proof}

\begin{proposition}
	$N_{\lambda}({^\circ}G)$ is a normal subloop of the loop ${^\circ}G$. 
\end{proposition}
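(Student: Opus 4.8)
The plan is to verify, for $N := N_{\lambda}({^\circ}G)$, the three conditions defining a normal subloop of ${^\circ}G$: $(a)$ $x\circ N = N\circ x$; $(b)$ $(N\circ x)\circ y = N\circ(x\circ y)$; and $(c)$ $x\circ(y\circ N) = (x\circ y)\circ N$, for all $x,y\in{^\circ}G$. I will use freely that $N$ is a normal subgroup of $G$ (Proposition~\ref{nlpt}), that $Z(G)\subseteq N$ (as noted before Proposition~\ref{ln}), and that, by the proof of Proposition~\ref{ln}, $a\in N$ precisely when $a$ commutes with every $[x,y^{-1}]$, hence with the whole derived subgroup $[G,G]$.

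Condition $(a)$ uses only $N\trianglelefteq G$: for $n\in N$ we have $x\circ n = n^{-1}xn^{2} = (n^{-1}xn^{2}x^{-1})\,x \in Nx$ and $n\circ x = x^{-1}nx^{2} = (x^{-1}nx)\,x\in Nx$, and since the maps $n\mapsto x\circ n$ and $n\mapsto n\circ x$ are injective (${^\circ}G$ is a loop) while $G$ is finite, both $x\circ N$ and $N\circ x$ are subsets of the coset $Nx$ of cardinality $|N|$, hence both equal $Nx$. Condition $(b)$ is immediate from the definition of the left nucleus, since $(N\circ x)\circ y = \{(a\circ x)\circ y : a\in N\} = \{a\circ(x\circ y) : a\in N\} = N\circ(x\circ y)$, each $a\in N$ associating on the left.

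For condition $(c)$ I would first reduce it, exactly as in the normality proof for $C({^\circ}G)$: using $(a)$ and $(b)$, the equality $x\circ(y\circ N) = (x\circ y)\circ N$ is equivalent to $x\circ(N\circ y) = (x\circ N)\circ y$, i.e.\ to the statement that for every $a\in N$ there is $b\in N$ with $x\circ(a\circ y) = (x\circ b)\circ y$ (and conversely; by finiteness it is enough to produce one such $b$ for each $a$). Unwinding $\circ$, the equation $x\circ(a\circ y) = (x\circ b)\circ y$ is equivalent to
\[
y^{-1}a^{-1}y\,x\,y^{-1}ay\,a \;=\; b^{-1}xb^{2}.
\]
The candidate I would try is $b := a\,[x^{-1},[a^{-1},y^{-1}]]^{-1}$. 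Since $[a^{-1},y^{-1}]\in[G,G]$, we have $[x^{-1},[a^{-1},y^{-1}]]\in[G,G,G]\subseteq Z(G)$, so $b\in aZ(G)\subseteq N$; and the same formula read backwards, $a := b\,[x^{-1},[b^{-1},y^{-1}]]$, stays in $N$ and produces the reverse matching.

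The main obstacle is checking that this $b$ satisfies the displayed identity; this is where the class-$3$ hypotheses are used. Writing $c := [a^{-1},y^{-1}] = a^{-1}y^{-1}ay\in[G,G]$, one has $y^{-1}ay = ac$ and $y^{-1}a^{-1}y = c^{-1}a^{-1}$, so the left-hand side equals $c^{-1}(a^{-1}xa)\,c\,a$; on the other hand, with $z := [x^{-1},c]\in Z(G)$ (central) one computes $b^{-1}xb^{2} = z^{-1}(a^{-1}xa)\,a$, using only that $z$ is central. Cancelling $a$, the identity reduces to $c^{-1}(a^{-1}xa)\,c = z^{-1}(a^{-1}xa)$, i.e.\ to $[c^{-1},\,a^{-1}xa] = z^{-1}$. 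Finally $a^{-1}xa = x\,[x^{-1},a^{-1}]$ with $[x^{-1},a^{-1}]\in[G,G]$, and since $[[G,G],[G,G]] = \{1\}$ and $[G,G,G]\subseteq Z(G)$ we get $[c^{-1},a^{-1}xa] = [c^{-1},x] = [x,c] = [x^{-1},c]^{-1} = z^{-1}$, completing the verification. All remaining steps are formal, so this computation is the only real work.
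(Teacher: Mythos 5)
Your proof is correct and follows essentially the same route as the paper: one verifies the three normality conditions, the only real content being that transporting an element $a\in N_{\lambda}({^\circ}G)$ across the bracketing perturbs it by an element of $[G,G,G]\subseteq Z(G)\subseteq N_{\lambda}({^\circ}G)$, so the perturbed element stays in the nucleus. The differences are cosmetic: for $x\circ N_{\lambda}({^\circ}G)=N_{\lambda}({^\circ}G)\circ x$ the paper rewrites $x\circ a=b\circ x$ explicitly (using $[a^{-1},[x,a]]=1$) where you use a counting argument inside the coset $N_{\lambda}({^\circ}G)x$, and for the last condition the paper directly computes $x\circ(y\circ a)=(x\circ y)\circ b$ with $b=a[[y,a^{-1}],x]$ instead of first reducing to the middle-slot form $x\circ(a\circ y)=(x\circ b)\circ y$ as you do.
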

\begin{proof}
	
		\n $(i)$ Clearly, $(N_{\lambda}({^\circ}G)\circ x)\circ y = N_{\lambda}({^\circ}G)\circ (x\circ y)$ for all $x,y\in {^\circ}G$.
		
		\vspace{0.2 cm}
		
		\n ($ii$) Let $x\in {^\circ}G$ and $a\in N_{\lambda}({^\circ}G)$. Then 
		\begin{eqnarray*}
			x\circ a &=& a^{-1}xa^{2}\\
			&=& a^{-1}[x,a]axa\\
			&=& [x,a]xa,\; \text{as}\; [a^{-1},[x,a]] = 1\\
			&=& xax^{-1}a^{-1}xa\\
			&=&x^{-1}(x^{2}ax^{-1}a^{-1}xax^{-2})x^{2}\\
			&=& b\circ x, \text{where}\; 
		\end{eqnarray*}
	\n	where $b = x^{2}ax^{-1}a^{-1}xax^{-2}$. Since $N_{\lambda}({^\circ}G)$ is a normal subgroup of the group $G$, $b \in N_{\lambda}({^\circ}G)$. Thus, $x\circ N_{\lambda}({^\circ}G) = N_{\lambda}({^\circ}G)\circ x$, for all $x\in {^\circ}G$.
	
	\vspace{0.2 cm}
		
		\n ($iii$) Let $x,y\in {^\circ}G$ and $a\in N_{\lambda}({^\circ}G)$. Then
		\begin{eqnarray*}
		x\circ (y\circ a) &=& x\circ(a^{-1}ya^{2})\\
		&=& a^{-2}y^{-1}axa^{-1}yaya^{2}\\
		&=& a^{-1}y^{-1}(ya^{-1}y^{-1}axa^{-1}yay^{-1})y^{2}a^{2}\\
		&=& a^{-1}y^{-1}([y,a^{-1}]x[a^{-1},y]x^{-1})xy^{2}a^{2}\\
		&=& a^{-1}y^{-1}[[y,a^{-1}],x]xy^{2}a^{2}\\
		&=& b^{-1}y^{-1}xy^{2}b^{2}, \text{where}\; b = a[[y,a^{-1}],x]\\
		&=& (x\circ y)\circ b.
		\end{eqnarray*}
		Since $[y,a^{-1},x]\in Z(G) \subseteq N_{\lambda}({^\circ}G)$, $b\in N_{\lambda}({^\circ}G)$. Thus, $x\circ (y\circ N_{\lambda}({^\circ}G)) = (x\circ y)\circ N_{\lambda}({^\circ}G)$. Hence, $N_{\lambda}({^\circ}G)$ is a normal subloop.
\end{proof}

\begin{proposition}\label{munl}
	$N_{\mu}({^\circ}G)$ is a normal subloop of the loop ${^\circ}G$.
\end{proposition}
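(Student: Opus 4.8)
The plan is to verify the three defining properties of a normal subloop for $N_{\mu}({^\circ}G)$, mimicking the proof just given for $N_{\lambda}({^\circ}G)$ and leaning on three facts already in hand: $N_{\mu}({^\circ}G)$ is a normal (indeed characteristic) subgroup of $G$ (Proposition~\ref{nlpt}); $N_{\mu}({^\circ}G)\subseteq N_{\lambda}({^\circ}G)$ (Proposition~\ref{lmrn}); and $Z(G)\subseteq N_{\mu}({^\circ}G)$ together with $[G,G,G]\subseteq Z(G)$. One first notes that $N_{\mu}({^\circ}G)$ is genuinely a subloop of ${^\circ}G$: being a normal subgroup of $G$ it is closed under $a\circ b=b^{-1}ab^{2}$, and the equation $X\circ a=b$ has its unique solution $aba^{-2}$ inside it, exactly as in Proposition~\ref{sl} (alternatively this is Corollary~\ref{nug}).

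For property $(iii)$, namely $(N_{\mu}({^\circ}G)\circ x)\circ y=N_{\mu}({^\circ}G)\circ (x\circ y)$, I would simply invoke $N_{\mu}({^\circ}G)\subseteq N_{\lambda}({^\circ}G)$: for every $a\in N_{\mu}({^\circ}G)$ one has $a\in N_{\lambda}({^\circ}G)$, hence $(a\circ x)\circ y=a\circ(x\circ y)$, and taking the union over such $a$ gives the stated equality of cosets. For property $(i)$, $x\circ N_{\mu}({^\circ}G)=N_{\mu}({^\circ}G)\circ x$, fix $a\in N_{\mu}({^\circ}G)$ and solve $b\circ x=x\circ a$ for $b$; since $x\circ a=a^{-1}xa^{2}$ and $b\circ x=x^{-1}bx^{2}$, this forces $b=xa^{-1}xa^{2}x^{-2}=(xa^{-1}x^{-1})(x^{2}a^{2}x^{-2})$, a product of conjugates of powers of $a$, so $b\in N_{\mu}({^\circ}G)$ because $N_{\mu}({^\circ}G)$ is normal in $G$. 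This yields $x\circ N_{\mu}({^\circ}G)\subseteq N_{\mu}({^\circ}G)\circ x$, and since ${^\circ}G$ is a finite loop its left and right translations are bijections, so both cosets have cardinality $|N_{\mu}({^\circ}G)|$ and the inclusion is an equality.

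For property $(ii)$, $x\circ (y\circ N_{\mu}({^\circ}G))=(x\circ y)\circ N_{\mu}({^\circ}G)$, I would run the same commutator manipulation as in the $N_{\lambda}$-case (using Lemma~\ref{l3} and the fact that $[G,G]$ is abelian), arriving at $x\circ(y\circ a)=(x\circ y)\circ b$ with $b=a\,[[y,a^{-1}],x]$. Since $[[y,a^{-1}],x]\in[G,G,G]\subseteq Z(G)\subseteq N_{\mu}({^\circ}G)$ and $a\in N_{\mu}({^\circ}G)$, we get $b\in N_{\mu}({^\circ}G)$; the resulting inclusion $x\circ (y\circ N_{\mu}({^\circ}G))\subseteq(x\circ y)\circ N_{\mu}({^\circ}G)$ is then upgraded to equality by the same finiteness-and-bijectivity argument. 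Combining the three properties gives that $N_{\mu}({^\circ}G)$ is a normal subloop.

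I do not expect a serious obstacle here, since the structure is parallel to the $N_{\lambda}$ case; the one point that must be handled with care is that every auxiliary element produced in the computations lies in $N_{\mu}({^\circ}G)$ rather than merely in $G$ or in $N_{\lambda}({^\circ}G)$ — this is exactly where normality of $N_{\mu}({^\circ}G)$ in $G$, the inclusion $Z(G)\subseteq N_{\mu}({^\circ}G)$, and (for property $(iii)$) the inclusion $N_{\mu}({^\circ}G)\subseteq N_{\lambda}({^\circ}G)$ are invoked. The secondary nuisance is the reverse inclusions in $(i)$ and $(ii)$, which I would settle by the cardinality argument rather than by inverting the correspondence $a\mapsto b$ explicitly.
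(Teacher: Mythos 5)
Your proof is correct and follows essentially the same route as the paper: property $(iii)$ via the inclusion $N_{\mu}({^\circ}G)\subseteq N_{\lambda}({^\circ}G)$, and property $(i)$ via an explicit element $b$ solving $b\circ x=x\circ a$ that lies in $N_{\mu}({^\circ}G)$ by normality of $N_{\mu}({^\circ}G)$ in $G$. The only divergence is at property $(ii)$, where the paper obtains $x\circ(y\circ N_{\mu}({^\circ}G))=(x\circ y)\circ N_{\mu}({^\circ}G)$ for free from $N_{\mu}({^\circ}G)=N_{\rho}({^\circ}G)$ (Proposition \ref{lmrn}$(i)$, since $a\in N_{\rho}({^\circ}G)$ means exactly $(x\circ y)\circ a=x\circ(y\circ a)$), whereas you rerun the commutator computation from the $N_{\lambda}$ case --- valid, but the shortcut is available.
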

\begin{proof}
	Clearly, by the Proposition \ref{lmrn} $(i)$ and $(ii)$, $(N_{\mu}({^\circ}G)\circ x)\circ y = N_{\mu}({^\circ}G)\circ (x\circ y)$ and $x\circ (y\circ N_{\mu}({^\circ}G)) = (x\circ y)\circ N_{\mu}({^\circ}G)$ for all $x,y\in {^\circ}G$. Now, let $x\in {^\circ}G$ and $a\in N_{\mu}({^\circ}G)$. Then
	\begin{eqnarray*}
	x\circ a &=& a^{-1}xa^{2}\\
	&=& [a^{-1},x]xa\\
	&=& x[a^{-1},x]a, \text{because}\; [x,[a^{-1},x]] = 1\\
	&=& x^{-1}(x^{2}a^{-1}xax^{-1}ax^{-2})x^{2}\\
	&=& b\circ x,
	\end{eqnarray*}
\n	where $b= x^{2}a^{-1}xax^{-1}ax^{-2}$. Since $N_{\mu}({^\circ}G)$ is a normal subgroup of the group $G$, $b\in N_{\mu}({^\circ}G)$. Thus, $x\circ N_{\mu}({^\circ}G) = N_{\mu}({^\circ}G) \circ x$ for all $x\in {^\circ}G$. Hence, $N_{\mu}({^\circ}G)$ is a normal subloop of the loop ${^\circ}G$.
\end{proof}
\begin{corollary}\label{nunl}
	The nucleus $N({^\circ}G)$ is a normal subloop of the loop ${^\circ}G$. Moreover, $(N({^\circ}G), \circ)$ is group with the induced binary operation $\circ$.
\end{corollary}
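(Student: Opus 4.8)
The plan is to reduce this corollary to the facts already proved about the middle nucleus. First I would invoke Proposition~\ref{lmrn}: it gives $N_{\mu}({^\circ}G) = N_{\rho}({^\circ}G)$ together with $N_{\mu}({^\circ}G)\subseteq N_{\lambda}({^\circ}G)$, so that
\[
N({^\circ}G) \;=\; N_{\lambda}({^\circ}G)\cap N_{\mu}({^\circ}G)\cap N_{\rho}({^\circ}G) \;=\; N_{\mu}({^\circ}G).
\]
Thus the nucleus of ${^\circ}G$ coincides with its middle nucleus, and the corollary becomes a statement purely about $N_{\mu}({^\circ}G)$.

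Given this identification, the first assertion is immediate: Proposition~\ref{munl} already asserts that $N_{\mu}({^\circ}G)$ is a normal subloop of ${^\circ}G$, hence so is $N({^\circ}G)$. For the ``moreover'' part I would appeal to Corollary~\ref{nug} with $i=\mu$, which says that $(N_{\mu}({^\circ}G),\circ)$ is a group under the induced operation; since $N({^\circ}G)=N_{\mu}({^\circ}G)$, the same holds for $(N({^\circ}G),\circ)$. Alternatively one could argue from first principles: every element of $N({^\circ}G)$ lies in all three nuclei, so $\circ$ is associative on $N({^\circ}G)$, and since $N({^\circ}G)$ is a subloop it is then a group; but quoting Corollary~\ref{nug} keeps the proof short.

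There is essentially no obstacle to anticipate here, since the substantive work has already been carried out: the inclusions among the three nuclei in Proposition~\ref{lmrn} and the normality computation in Proposition~\ref{munl}. The only detail to check is that the binary operation referred to in the statement of the corollary is literally the restriction of $\circ$ to $N({^\circ}G)$, so that Corollaries~\ref{nug} and Proposition~\ref{nlpt} apply verbatim.
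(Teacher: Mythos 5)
Your proposal is correct and follows exactly the paper's own argument: both identify $N({^\circ}G)$ with $N_{\mu}({^\circ}G)$ via Proposition~\ref{lmrn}, then cite Proposition~\ref{munl} for normality and Corollary~\ref{nug} for the induced group structure. No gaps.
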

\begin{proof}
	Since $N({^\circ}G) = N_{\lambda}({^\circ}G)\cap N_{\mu}({^\circ}G)\cap N_{\rho}({^\circ}G), N({^\circ}G) = N_{\mu}({^\circ}G) = N_{\rho}({^\circ}G)$ using the Proposition \ref{lmrn}. Hence, the corollary follows by the Proposition \ref{munl} and the Corollary \ref{nug}.
\end{proof}

\begin{lemma}\label{l1}
	View $C(^{\circ}G)$ as a subgroup of the group $G$. For $a\in C(^{\circ}G)$ and $x\in {^\circ}G$, we have 
	\begin{enumerate}
		\item[($i$)] $[a,x,x] = 1$ and $[a,x,a] = 1,$
		\item[($ii$)] $[x^{-1}, a^{-1}] = [a,x^{-1}]= [x,a],$
		\item[($iii$)] $[x^{2},a] = [x, a]^{2} = [x, a^{2}] = [x, a^{-1}] = [x^{-1},a],$
		\item[$(iv)$] $[a,x^{3}] = [a,x]^{3} = [a^{3},x] = 1$.
	\end{enumerate}
\end{lemma}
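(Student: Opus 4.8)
The whole statement will follow once we establish that, for $a\in C(^{\circ}G)$ and every $x\in G$,
\[
[x,a,x]=1,\qquad [x,a,a]=1,\qquad [x,a]^{3}=1 .
\]
Granting this, parts (i)--(iv) are short commutator manipulations in the class-$3$ group $G$ (where $[G,G]$ is abelian, $[G,G,G]\subseteq Z(G)$ and $\gamma_{4}(G)=1$): the first two identities say that $[x,a]$, and hence $[a,x]=[x,a]^{-1}$, commutes with both $x$ and $a$, which gives (i) immediately and, after expanding $[a,xx^{-1}]$ and $[aa^{-1},x]$, gives $[a,x^{-1}]=[a,x]^{-1}=[x,a]$ and $[x^{-1},a^{-1}]=[x,a]$, i.e.\ (ii); moreover $[x^{2},a]=[x,a]^{x}[x,a]=[x,a]^{2}$, $[x,a^{2}]=[x,a][x,a]^{a}=[x,a]^{2}$, while $[x,a^{-1}]=[x^{-1},a]=[x,a]^{-1}=[x,a]^{2}$ by the third identity, which is (iii); and $[a,x^{3}]=[a,x]^{3}=[a^{3},x]=[x,a]^{-3}=1$, which is (iv).

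To obtain the three displayed identities I would start from the characterization in (\ref{s2e1}): for $a\in C(^{\circ}G)$ one has $(ax)^{3}=a^{3}x^{3}$, equivalently $(ax)^{2}=x^{2}a^{2}$, for all $x\in G$ (see (\ref{s2e1}) and the observation recorded in the proof of the normality of $C(^{\circ}G)$). Set $c=[x,a]$, $d=[x,a,x]=[c,x]$ and $e=[x,a,a]=[c,a]$, so that $d,e\in Z(G)$. A direct collection in $G$ yields
\[
(ax)^{2}=a^{2}x^{2}\,cd,\qquad (ax)^{3}=a^{3}x^{3}\,c^{3}d^{5}e,\qquad [x^{2},a^{2}]=c^{4}d^{2}e^{2},
\]
so the two hypotheses become the identities $c^{3}de^{2}=1$ and $c^{3}d^{5}e=1$, valid for every $x\in G$.

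Next I feed suitable elements into these. Since $(ax)^{2}=x^{2}a^{2}$ holds for every $x$, the relation $[y,a]^{3}[y,a,y][y,a,a]^{2}=1$ holds for all $y\in G$; taking $y=ax$ and using $[ax,a]=c$, $[ax,a,ax]=de$, $[ax,a,a]=e$ turns it into $c^{3}de^{3}=1$, which together with $c^{3}de^{2}=1$ forces $e=1$, i.e.\ $[x,a,a]=1$. With $e=1$ the two identities read $c^{3}d=1$ and $c^{3}d^{5}=1$, hence $d^{4}=1$; in particular $[x^{2},a,x^{2}]=d^{4}=1$. Feeding $y=x^{2}$ into $(ay)^{3}=a^{3}y^{3}$, and using $[x^{2},a]=c^{2}d$ and $[x^{2},a,a]=e^{2}=1$, we get $[x^{2},a]^{3}=1$, i.e.\ $c^{6}d^{3}=1$; since $c^{6}=(c^{3})^{2}=d^{-2}$ this gives $d=1$, and then $c^{3}=d^{-1}=1$. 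This proves the three identities, and hence the lemma.

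The part that needs genuine care is the collection step: getting the exponents right in $(ax)^{3}=a^{3}x^{3}c^{3}d^{5}e$ and in $[x^{2},a^{2}]=c^{4}d^{2}e^{2}$ (and checking $[x^{2},a,x^{2}]=d^{4}$ and $[ax,a,ax]=de$), since the central corrections $d$ and $e$ are easy to mis-count. Everything afterwards is purely formal — the substitution $y=ax$ kills $e$, comparing the square and cube relations forces $d^{4}=1$, and the cube relation at $y=x^{2}$ (simplified using $d^{4}=1$) then collapses to $d=1$ and $c^{3}=1$.
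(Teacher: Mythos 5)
Your argument is correct, but it is organized quite differently from the paper's. The paper proves each part by direct element-wise manipulation: part (i) rewrites $[a,x,x]=ax(a^{-1}xa^{2})(a^{-1}x^{-1})^{2}$ and then substitutes $a^{-1}xa^{2}=x^{-1}ax^{2}$ (the commutant condition $a\circ x=x\circ a$) together with $(xa)^{-2}=x^{-2}a^{-2}$; parts (ii)--(iv) are similar short computations combined with Lemma \ref{l3}, and the key vanishing $[x,a]^{3}=1$ falls out at the very end of (iv) from $[x,a]^{2}[x,a]=[x^{-1},a][a,x^{-1}]=1$. You instead reduce the whole lemma to the three identities $[x,a,x]=[x,a,a]=1$ and $[x,a]^{3}=1$, and extract these by collecting $(ax)^{2}$, $(ax)^{3}$ and $[x^{2},a^{2}]$ into words in $c,d,e$ and solving the resulting system via the substitutions $y=ax$ and $y=x^{2}$; I checked the collection exponents, the values $[ax,a]=c$, $[ax,a,ax]=de$, $[x^{2},a]=c^{2}d$, $[x^{2},a,x^{2}]=d^{4}$, and the elimination $e=1\Rightarrow d^{4}=1\Rightarrow d=1\Rightarrow c^{3}=1$, and they all go through, as does the routine derivation of (i)--(iv) from the three identities. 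Two caveats. First, your formulas $(ax)^{2}=a^{2}x^{2}cd$, $(ax)^{3}=a^{3}x^{3}c^{3}d^{5}e$, $[x^{2},a^{2}]=c^{4}d^{2}e^{2}$ are correct only for the left-normed convention $[u,v]=u^{-1}v^{-1}uv$; the paper's convention, fixed by $xy=[x,y](yx)$, is $[u,v]=uvu^{-1}v^{-1}$, under which for instance $(ax)^{2}=a^{2}x^{2}cd^{2}e$. Since the two conventions give commutators that are conjugate (hence equal once $d=e=1$), your conclusions transfer, but you should state the convention explicitly or the reader will find your collection identities false. Second, you use both $(ax)^{3}=a^{3}x^{3}$ and $(ax)^{2}=x^{2}a^{2}$ as hypotheses; the equivalence of these is only asserted, not proved, in the paper (in the proof of normality of $C({^\circ}G)$), and the paper's own proof of this lemma leans on it too, so you are on equal footing --- but a fully self-contained write-up of your route should either prove that equivalence or derive the second relation from the first by further substitutions. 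What your approach buys is a single mechanical scheme that yields all four parts at once; what it costs is exactly the exponent bookkeeping you flagged.
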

\begin{proof}
 Let $a\in C(^{\circ}G)$ and $x\in G$. Then, we have
\vspace{0.2 cm}

		\n ($i$) $[a,x,x] = [[a,x],x] = axa^{-1}xax^{-1}a^{-1}x^{-1}= ax(a^{-1}xa^{2})(a^{-1}x^{-1})^{2} = ax(x^{-1}ax^{2})(a^{-1}x^{-1})^{2} = a^{2}x^{2}x^{-2}a^{-2} = 1$. By similar argument, one can obtain $[a,x,a] = 1$.
		
		\vspace{0.2 cm}
		
		\n ($ii$) $[x^{-1}, a^{-1}] = x^{-1}a^{-1}xa = (x^{-1}a^{-1}x^{2})x^{-1}a = axa^{-2}x^{-1}a= ax(a^{-1}$ $xa^{2})^{-1} = axx^{-2}a^{-1}x = [a,x^{-1}]$. Similarly, $[a,x^{-1}] = [x,a]$.
		
	\vspace{0.2 cm}
		
		\n ($iii$) 	 
		\begin{eqnarray*}
		[x^{2}, a] &=& [x,[x,a]][x,a]^{2},\; \text{using the Lemma \ref{l3} $(i)$}\\
		&=& [x,[a,x]^{-1}][x,a]^{2}\\
		&=& [x,[a,x]]^{-1}[x,a]^{2}, \; \text{as}\; [G,G,G]\subseteq Z(G)\\
		&=& [[a,x],x][x,a]^{2}\\
		&=& [x,a]^{2}, \text{using part($i$)}
		\end{eqnarray*}
	 By the similar argument, $[x,a^{2}] = [x,a]^{2}$. Now, $[x^{2}, a] = x^{2}ax^{-2}$ $a^{-1} = x(a\circ x^{-1})a^{-1} = x(x^{-1}\circ a)a^{-1} = xa^{-1}x^{-1}a^{2}a^{-1} = xa^{-1}x^{-1}$ $a = [x,a^{-1}]$. By the similar argument, $[x,a^{2}] = [x^{-1},a]$. 
		\item[($iv$)] \begin{eqnarray*}
		[a,x^{3}] &=& [a,x^{2}][x^{2},[a,x]][a,x], \; \text{using the Lemma \ref{l3} $(ii)$}\\
		&=& [a,x^{2}][[a,x],x^{2}]^{-1}[a,x]\\
		&=& [a,x^{2}][a,x,x]^{-2}[a,x], \; \text{as}\; [G,G,G]\subseteq Z(G)\\
		&=& [a,x]^{3}, \; \text{using parts $(i)$ and $(iii)$}.
		\end{eqnarray*}
		By the similar argument, $[a^{3},x] = [a,x]^{3}$. Now, using parts $(ii)$ and $(iii)$, we have $[x,a]^{3} = [x,a]^{2}[x,a] = [x^{-1}, a][a,x^{-1}] = 1$. Thus, $[a,x^{3}] = 1$. Similarly, $[a^{3},x] = 1$.
	\end{proof}
\begin{corollary}
	$C(^{\circ}G)^{3}\subseteq Z(G)$.
\end{corollary}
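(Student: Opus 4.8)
The plan is to read the statement straight off Lemma \ref{l1}$(iv)$. Here $C(^{\circ}G)$ is being regarded as the (characteristic) subgroup of the group $G$ that it is known to be, and $C(^{\circ}G)^3$ denotes the subgroup of $G$ generated by the set of cubes $\{a^3 \mid a\in C(^{\circ}G)\}$. So it suffices to show that every such cube lies in $Z(G)$ and then invoke the fact that $Z(G)$ is a subgroup.

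First I would fix an arbitrary $a\in C(^{\circ}G)$. Applying Lemma \ref{l1}$(iv)$ with this $a$ and an arbitrary $x\in G$ gives $[a^3,x]=[a,x]^3=1$. Since $x$ was arbitrary, $a^3$ commutes with every element of $G$, i.e. $a^3\in Z(G)$. Thus the whole generating set $\{a^3\mid a\in C(^{\circ}G)\}$ is contained in $Z(G)$. Because $Z(G)$ is closed under products and inverses, the subgroup it generates, namely $C(^{\circ}G)^3$, is also contained in $Z(G)$, which is the claim.

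There is essentially no obstacle: all the work has already been done in Lemma \ref{l1}$(iv)$, whose proof rests on the commutator identities of Lemma \ref{l3} together with the standing hypothesis $[G,G,G]\subseteq Z(G)$ coming from $G$ having nilpotency class $3$. The only point even worth articulating is the last step, that membership of each cube in the centre passes to the subgroup they generate, and this is immediate from $Z(G)$ being a subgroup.
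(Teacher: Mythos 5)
Your proof is correct and follows the same route as the paper, which simply cites Lemma \ref{l1}$(iv)$; you have merely spelled out the (immediate) step that $[a^3,x]=1$ for all $x$ puts each cube in $Z(G)$ and hence the subgroup they generate as well.
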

\begin{proof}
	Follows directly from the Lemma \ref{l1} $(iv)$.
\end{proof}
\begin{proposition}\label{zg}
	$Z({^\circ}G) = C(^{\circ}G)\cap N_{i}({^\circ}G)$, where $i\in \{\lambda, \mu, \rho\}$.
\end{proposition}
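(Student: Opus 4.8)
The plan is to dispose of the cases $i=\mu$ and $i=\rho$ formally, and then to reduce the case $i=\lambda$ to a single commutator identity in the class‑$3$ group $G$. By definition $Z({^\circ}G)=C({^\circ}G)\cap N({^\circ}G)$, and by Proposition \ref{lmrn} we have $N({^\circ}G)=N_{\mu}({^\circ}G)=N_{\rho}({^\circ}G)$; hence $Z({^\circ}G)=C({^\circ}G)\cap N_{\mu}({^\circ}G)=C({^\circ}G)\cap N_{\rho}({^\circ}G)$, which settles $i\in\{\mu,\rho\}$. Moreover, since $N_{\mu}({^\circ}G)\subseteq N_{\lambda}({^\circ}G)$, we get $Z({^\circ}G)=C({^\circ}G)\cap N_{\mu}({^\circ}G)\subseteq C({^\circ}G)\cap N_{\lambda}({^\circ}G)$. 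So the only thing that needs an argument is the reverse inclusion $C({^\circ}G)\cap N_{\lambda}({^\circ}G)\subseteq N_{\mu}({^\circ}G)$.

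Fix $a\in C({^\circ}G)\cap N_{\lambda}({^\circ}G)$. From the proof of Proposition \ref{ln} and its analogue for $N_{\mu}$, the condition $a\in N_{\lambda}({^\circ}G)$ is equivalent to $[a,[x,y^{-1}]]=1$ for all $x,y\in G$, and $a\in N_{\mu}({^\circ}G)$ is equivalent to $[x,[a,y^{-1}]]=1$ for all $x,y\in G$. Since $y\mapsto y^{-1}$ is a bijection of $G$ and the centralizer $C_{G}(a)$ is a subgroup, the first condition says precisely that $a$ centralizes $[G,G]$; in particular $[x,y,a]=1$ for all $x,y$. Using $a\in C({^\circ}G)$ together with Lemma \ref{l1}$(ii)$ we have $[a,y^{-1}]=[y,a]$, so the target condition $a\in N_{\mu}({^\circ}G)$ becomes $[x,[y,a]]=1$ for all $x,y$, i.e. $[y,a]$ is central for every $y$, which (since $[y,a]=[a,y]^{-1}\in[G,G]$) is in turn equivalent to $[a,x,y]:=[[a,x],y]=1$ for all $x,y$. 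Thus everything reduces to proving $[a,x,y]=1$ for all $x,y\in G$.

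Here is the computation I would carry out. Since $G$ has class $3$, every triple commutator lies in $[G,G,G]\subseteq Z(G)$, $[[G,G,G],G]=\{1\}$, and $[[G,G],[G,G]]=\{1\}$; consequently the triple commutator is multiplicative in each argument, and the Hall--Witt identity collapses to the Jacobi relation $[a,x,y]\,[x,y,a]\,[y,a,x]=1$. We already know $[x,y,a]=1$, so $[a,x,y]=[y,a,x]^{-1}$. Substituting $xy$ for the variable in $[a,x,x]=1$ (Lemma \ref{l1}$(i)$) and expanding multiplicatively gives the antisymmetry $[a,x,y]=[a,y,x]^{-1}$. Since $[a,x]=[x,a]^{-1}$ with $[x,a]\in[G,G]$ (so its bracket with any element is central), we also have $[a,x,y]=[x,a,y]^{-1}$ and $[y,a,x]=[a,y,x]^{-1}$; feeding these into $[a,x,y]=[y,a,x]^{-1}$ yields $[a,x,y]=[a,y,x]$, and together with the antisymmetry this forces $[a,x,y]^{2}=1$. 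On the other hand $[a,x]^{3}=1$ by Lemma \ref{l1}$(iv)$, whence $[a,x,y]^{3}=[[a,x]^{3},y]=1$. Therefore $[a,x,y]=1$ for all $x,y$, which finishes the proof.

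The formal reduction in the first two paragraphs is routine; the real work, and the step I expect to be fiddly, is the last paragraph: establishing multiplicativity and antisymmetry of the triple commutator in the class‑$3$ group, keeping track of the inverses through the Hall--Witt/Jacobi relation without sign errors, and recognising that exactly parts $(i)$, $(ii)$ and $(iv)$ of Lemma \ref{l1} are what is needed to pin $[a,x,y]$ down to an element annihilated by both $2$ and $3$.
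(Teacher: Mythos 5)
Your proof is correct, but the substantive step is carried out by a genuinely different computation than the paper's. The logical skeleton you set up is actually slightly cleaner: you dispose of $i=\mu,\rho$ and one inclusion for $i=\lambda$ purely formally from Proposition \ref{lmrn}, and isolate the single claim $C({^\circ}G)\cap N_{\lambda}({^\circ}G)\subseteq N_{\mu}({^\circ}G)$, which you correctly reduce to showing $[a,x,y]=1$ for all $x,y$ when $a\in C({^\circ}G)$ centralizes $[G,G]$. The paper instead proves $C({^\circ}G)\cap N_{\lambda}({^\circ}G)=C({^\circ}G)\cap N_{\rho}({^\circ}G)$ by one long chain of equivalences: it unwinds the nucleus condition into the group identity $[y,a]x=x[y^{2},a^{2}]$ and then collapses $[y^{2},a^{2}]=[y,a]^{4}=[y,a]$ using Lemma \ref{l1}$(iii)$ and $(iv)$, so that centrality of $[y,a]$ drops out in two lines. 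You reach the same intermediate fact (that $[a,y]$ is central) by a more structural route: multilinearity of triple commutators in a class-$3$ group, the Hall--Witt/Jacobi relation to trade $[a,x,y]$ for $[y,a,x]^{-1}$ using $[x,y,a]=1$, antisymmetry from polarizing $[a,x,x]=1$ (Lemma \ref{l1}$(i)$) to get $2$-torsion, and Lemma \ref{l1}$(iv)$ to get $3$-torsion. This is correct and arguably more illuminating about \emph{why} the triple commutator dies (it is killed by coprime exponents), but it is longer and leans on two facts the paper never writes down --- multilinearity of $[\cdot,\cdot,\cdot]$ and the collapse of Hall--Witt to the Jacobi relation in class $3$ --- which, while routine, would each need a line of verification in a final write-up. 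The paper's argument buys brevity and stays entirely inside Lemma \ref{l1}; yours buys a transparent torsion explanation and avoids Lemma \ref{l1}$(iii)$ altogether.
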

\begin{proof}
	Let $x,y\in ^{\circ}G$. Then $a\in C({^\circ}G)\cap N_{\lambda}({^\circ}G)$
	
	$\iff(a\circ x)\circ y = a\circ (x\circ y)$
	
	$\iff (x\circ a)\circ y = (x\circ y)\circ a$
	
	$\iff y^{-1}a^{-1}xa^{2}y^{2} = a^{-1}y^{-1}xy^{2}a^{2}$
	
	$\iff (yay^{-1}a^{-1})x = x(y^{2}a^{2}y^{-2}a^{-2})$
	
	$\iff  [y,a]x = x[y^{2}, a^{2}]$
	
	$\iff [y,a]x = x[y,a]^{4}, \text{using the Lemma \ref{l1} $(iii)$},$
	
	$\iff [y,a]x = x[y,a], \text{using the Lemma \ref{l1} $(iv)$},$
	
	$\iff [a^{-1},y]x = x[a^{-1},y], \text{using the Lemma \ref{l1} $(ii)$},$
	
	$\iff a^{-1}yay^{-1}x = xa^{-1}yay^{-1}$
	
	$\iff y^{-1}xy = a^{-1}y^{-1}axa^{-1}ya$
	
	$\iff a^{-1}y^{-1}xy^{2}a^{2} = a^{-2}y^{-1}axa^{-1}yaya^{2}$
	
	$\iff (x\circ y) \circ a = (a^{-1}ya^{2})^{-1}x(a^{-1}ya^{2})^{2}$
	
	$\iff (x\circ y) \circ a = x\circ (y\circ a)$
	
	$\iff a\in C({^\circ}G)\cap N_{\rho}({^\circ}G)$.
	
	\n Thus, $C(^{\circ}G)\cap N_{\lambda}({^\circ}G)= C(^{\circ}G)\cap N_{\rho}({^\circ}G)$. Using the Proposition \ref{lmrn} $(i)$, $C({^\circ}G)\cap N_{\lambda}({^\circ}G) = C({^\circ}G)\cap N_{\mu}({^\circ}G)= C({^\circ}G)\cap N_{\rho}({^\circ}G)$. Hence, $Z({^\circ}G) = C({^\circ}G)\cap N({^\circ}G) = C(^{\circ}G)\cap N_{i}({^\circ}G)$, for any $i\in \{\lambda, \mu, \rho\}$.
\end{proof}

\n In \cite[Proposition 4.4, p. 1450058-10]{lal2014weak}, it is proved that if $3$ does not divide the order of $G$, then $C(^{\circ}G)^{2}\subseteq Z(G)$. Below, we prove that the commutant of $^{\circ}G$ and the center of the group $G$ are equal in this case.

\begin{theorem}\label{p6}
	Let $3$ does not divide the order of $G$. Then $C(^{\circ}G) = Z(G)$.
\end{theorem}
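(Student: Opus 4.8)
The plan is to establish the two inclusions $Z(G)\subseteq C({^\circ}G)$ and $C({^\circ}G)\subseteq Z(G)$ separately; the first is a one-line verification valid for any $G$, and the second is where the hypothesis $3\nmid|G|$ does its work.

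For $Z(G)\subseteq C({^\circ}G)$: given $a\in Z(G)$ and any $x\in G$, centrality of $a$ gives $a\circ x=x^{-1}ax^{2}=ax$ and $x\circ a=a^{-1}xa^{2}=ax$, so $a\circ x=x\circ a$ for every $x$, i.e. $a\in C({^\circ}G)$ (in fact $a\in Z({^\circ}G)$, since $Z(G)$ already lies in all three nuclei of ${^\circ}G$).

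For the reverse inclusion I would run a coprimality argument on powers of $a$. Fix $a\in C({^\circ}G)$, and recall from the start of Section~2 that $C({^\circ}G)$ is a genuine (characteristic) subgroup of $G$, so that powers and inverses of $a$ behave as in $G$ and the manipulation below is legitimate. Lemma~\ref{l1}$(iv)$ (equivalently, the Corollary $C({^\circ}G)^{3}\subseteq Z(G)$) shows $[a^{3},x]=1$ for all $x\in G$, hence $a^{3}\in Z(G)$, and this step uses no hypothesis on $|G|$. Now invoke the cited result \cite[Proposition~4.4]{lal2014weak}: since $3\nmid|G|$, we have $a^{2}\in Z(G)$, and therefore $a^{-2}=(a^{2})^{-1}\in Z(G)$ because $Z(G)$ is a subgroup. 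Since $3-2=1$, it follows that $a=a^{3}\,a^{-2}\in Z(G)$. Thus $C({^\circ}G)\subseteq Z(G)$, and combining with the first inclusion gives $C({^\circ}G)=Z(G)$.

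I do not expect a genuine obstacle here: the two load-bearing facts — that cubes of elements of $C({^\circ}G)$ are always central, and that squares are central once $3\nmid|G|$ — are already available, and the final move is just the standard observation that an element whose square and cube both lie in a subgroup must itself lie in that subgroup. The only things that need to be stated with a little care are that $C({^\circ}G)$ is honestly a subgroup of $G$ (so $a^{-2}$ makes sense and $a=a^{3}a^{-2}$ holds on the nose) and the trivial reverse inclusion $Z(G)\subseteq C({^\circ}G)$.
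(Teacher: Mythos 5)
Your proof is correct, but it finishes differently from the paper's. Both arguments hinge on the same imported fact, namely that $3\nmid|G|$ forces $C({^\circ}G)^{2}\subseteq Z(G)$ (the paper quotes this from \cite[Proposition~4.4]{lal2014weak} immediately before the theorem), and both treat the inclusion $Z(G)\subseteq C({^\circ}G)$ as the trivial direction. Where you diverge is in the last step: the paper goes back to the membership criterion $(xa)^{2}=a^{2}x^{2}$ for $a\in C({^\circ}G)$, uses centrality of $a^{2}$ to rewrite this as $(xa)^{2}=x^{2}a^{2}$, and cancels to get $ax=xa$ directly; you instead combine $a^{2}\in Z(G)$ with the hypothesis-free fact $a^{3}\in Z(G)$ (Lemma~\ref{l1}$(iv)$, i.e.\ $C({^\circ}G)^{3}\subseteq Z(G)$) and conclude $a=a^{3}a^{-2}\in Z(G)$ because $C({^\circ}G)$ is a genuine subgroup of $G$. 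Your route is slightly more structural and makes transparent exactly where the coprimality to $3$ enters (it is only needed to get the square central, the cube being central unconditionally), while the paper's route avoids invoking the cube identity at all and stays entirely inside the defining relation of the commutant. Both are complete; there is no gap in your argument.
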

\begin{proof}
	Let $a\in C(^{\circ}G)$ and $x\in G$.  Then  $(xa)^{2}= a^{2}x^{2}$. Since $C(^{\circ}G)^{2}\subseteq Z(G)$, $(xa)^{2}= a^{2}x^{2}= x^{2}a^{2}$. This implies that $ax = xa$ for all $x\in G$. Hence $a\in Z(G)$. Therefore, $C(^{\circ}G)\subseteq Z(G)$. Since $Z(G) \subseteq C(^{\circ}G)$, $C(^{\circ}G) = Z(G)$.
\end{proof}

\begin{corollary}\label{p6c}
	Let $3$ does not divide the order of $G$. Then $C(^{\circ}G)=Z(G) = Z({^\circ}G)$.
\end{corollary}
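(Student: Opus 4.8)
The plan is to combine Theorem \ref{p6} with Proposition \ref{zg} and the elementary observation (recorded just before Proposition \ref{ln}) that the center $Z(G)$ of the group is contained in each of the nuclei $N_i({^\circ}G)$. Theorem \ref{p6} already gives the first equality $C({^\circ}G) = Z(G)$ under the hypothesis that $3 \nmid |G|$, so the only remaining task is to identify $Z({^\circ}G)$ with $Z(G)$.

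First I would invoke Proposition \ref{zg}, which expresses $Z({^\circ}G) = C({^\circ}G) \cap N_i({^\circ}G)$ for any $i \in \{\lambda,\mu,\rho\}$; take $i = \lambda$ for concreteness. Substituting the conclusion of Theorem \ref{p6} turns this into $Z({^\circ}G) = Z(G) \cap N_{\lambda}({^\circ}G)$. Since $Z(G) \subseteq N_{\lambda}({^\circ}G)$, the intersection collapses to $Z(G)$, giving $Z({^\circ}G) = Z(G)$. Chaining this with $C({^\circ}G) = Z(G)$ yields the claimed triple equality.

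There is essentially no obstacle here: the corollary is a formal consequence of results already established in this section, and the argument is a one-line set-theoretic manipulation once Theorem \ref{p6} and Proposition \ref{zg} are in hand. The only point requiring a moment's care is to make sure the containment $Z(G) \subseteq N_{\lambda}({^\circ}G)$ is actually available — but this was noted explicitly in the text preceding Proposition \ref{ln} (and is immediate, since a central element of $G$ satisfies the defining nuclear identities for $\circ$ trivially).

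\begin{proof}
By Theorem \ref{p6}, $C({^\circ}G) = Z(G)$. By Proposition \ref{zg}, $Z({^\circ}G) = C({^\circ}G) \cap N_{\lambda}({^\circ}G) = Z(G) \cap N_{\lambda}({^\circ}G)$. Since $Z(G) \subseteq N_{\lambda}({^\circ}G)$, this gives $Z({^\circ}G) = Z(G)$. Hence $C({^\circ}G) = Z(G) = Z({^\circ}G)$.
\end{proof}
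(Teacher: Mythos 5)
Your proof is correct and follows exactly the route the paper intends: the corollary is stated without proof precisely because it is the immediate combination of Theorem \ref{p6}, Proposition \ref{zg}, and the observation preceding Proposition \ref{ln} that $Z(G)$ lies in each nucleus of ${^\circ}G$. Nothing is missing.
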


\begin{proposition}\label{p4}
	 ${^\circ}G/ C(^{\circ}G)$ is a group.
\end{proposition}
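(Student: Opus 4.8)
The plan is to identify the quotient loop $^{\circ}G/C({^\circ}G)$ with the associated gyrogroup of a suitable quotient \emph{group}, and then to apply the Foguel--Ungar criterion \cite[Theorem 3.6]{foguel2001gyrogroups}. Write $C=C({^\circ}G)$. We already know that $C$ is a normal subloop of $^{\circ}G$, so the quotient loop $^{\circ}G/C$ is well defined; and $C$ is a characteristic (hence normal) subgroup of the group $G$, so the quotient group $\bar G=G/C$ is defined. The key observation is that the canonical surjection $\pi\colon G\to\bar G$ is simultaneously a loop epimorphism from $^{\circ}G$ onto $^{\circ}\bar G$: since the operation is given by the group word $x\circ y=y^{-1}xy^{2}$, it is preserved by every group homomorphism, so $\pi(x\circ y)=\pi(y)^{-1}\pi(x)\pi(y)^{2}=\pi(x)\circ\pi(y)$. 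The set-theoretic fibre of this map over the identity is exactly $C$, so the first isomorphism theorem for loops yields $^{\circ}G/C\cong{^\circ}\bar G$.

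It then remains to check that $^{\circ}\bar G$ is a group, for which, by \cite[Theorem 3.6]{foguel2001gyrogroups}, it suffices that $\bar G$ be nilpotent of class at most $2$ (if $\bar G$ happens to be abelian the conclusion is immediate, since then $\circ$ coincides with the group multiplication). Now $G$ has nilpotency class $3$, hence $[G,G,G]\subseteq Z(G)$; and $Z(G)\subseteq C$, because any $a\in Z(G)$ satisfies $a\circ x=x^{-1}ax^{2}=ax=a^{-1}xa^{2}=x\circ a$ for all $x\in G$. Therefore $[G,G,G]\subseteq C=\ker\pi$, which (since $\pi$ is surjective) gives $[\bar G,\bar G,\bar G]=\{1\}$, i.e.\ $\bar G$ has class at most $2$. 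Applying \cite[Theorem 3.6]{foguel2001gyrogroups} to $\bar G$ shows that $^{\circ}\bar G$ is a group, and hence so is the isomorphic loop $^{\circ}G/C({^\circ}G)$.

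The one point that needs a little care is the identification $^{\circ}G/C\cong{^\circ}\bar G$: one must confirm that the induced map $^{\circ}G\to{^\circ}\bar G$ is genuinely a loop homomorphism whose kernel is exactly $C({^\circ}G)$, and that the loop-theoretic isomorphism theorem applies, so that no larger normal subloop is inadvertently collapsed. This is routine once one recalls that $C({^\circ}G)$ has already been established to be a normal subloop of $^{\circ}G$ and coincides, as a set, with the fibre of $\pi$ over the identity of $G$. (Alternatively, one can bypass quotient loops entirely: since $C$ is a normal subloop, $^{\circ}G/C$ is associative precisely when every associator $A(x,y,z)$ of $^{\circ}G$ lies in $C$, and a direct expansion shows $A(x,y,z)\in[G,G,G]\subseteq Z(G)\subseteq C$.) Beyond this, the argument is a direct consequence of the class-$3$ hypothesis on $G$.
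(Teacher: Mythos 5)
Your proof is correct, but it takes a different route from the one the paper uses for this proposition. The paper proves Proposition \ref{p4} by a direct coset computation: it expands $\overline{x}\circ(\overline{y}\circ\overline{z})$ and shows it differs from $(\overline{x}\circ\overline{y})\circ\overline{z}$ only by the factor $[[y,z^{-1}],x]\in[G,G,G]\subseteq Z(G)\subseteq C({^\circ}G)$ — in other words, exactly the ``direct expansion of the associator'' that you relegate to your parenthetical alternative at the end. Your main argument — identify ${^\circ}G/C({^\circ}G)$ with ${^\circ}(G/C({^\circ}G))$ via the fact that $\circ$ is a group word preserved by $\pi\colon G\to G/C({^\circ}G)$, then invoke \cite[Theorem 3.6]{foguel2001gyrogroups} since $[G,G,G]\subseteq Z(G)\subseteq C({^\circ}G)$ forces the quotient group to have class at most $2$ — is essentially the paper's proof of the subsequent Theorem \ref{p3}, which in fact subsumes Proposition \ref{p4}. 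Both routes are sound. The paper's computation is self-contained and has the side benefit of exhibiting the associator explicitly (recorded later as Proposition \ref{asct}); your route is more conceptual but leans on the loop isomorphism theorem and on the identification of loop-cosets with group-cosets of $C({^\circ}G)$ (which the paper checks via $C({^\circ}G)\circ x = C({^\circ}G)x$) — you correctly flag this as the point needing care, and your justification of it is adequate.
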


\begin{proof}
	For all $\overline{x}, \overline{y}, \overline{z} \in {^\circ}G/C({^\circ}G)$, we have
	\begin{eqnarray*}
		\overline{x}\circ (\overline{y}\circ \overline{z}) &=& x\circ C({^\circ}G)\circ ((y\circ C({^\circ}G))\circ (z\circ C({^\circ}G)))\\
		&=& (x\circ (y\circ z))\circ C({^\circ}G)\\
		&=& (x\circ z^{-1}yz^{2})\circ C({^\circ}G)\\
		&=& z^{-2}y^{-1}zxz^{-1}yz^{2}z^{-1}yz^{2}\circ C({^\circ}G)\\
		&=& z^{-1}(y^{-1}y)z^{-1}y^{-1}zxz^{-1}yz(y^{-1}y)yz^{2}\circ C({^\circ}G)\\
		&=& z^{-1}y^{-1}(yz^{-1}y^{-1}zxz^{-1}yzy^{-1})y^{2}z^{2}\circ C({^\circ}G)\\
		&=& z^{-1}y^{-1}[y,z^{-1}]x[z^{-1},y]y^{2}z^{2}\circ C({^\circ}G)\\
		&=& z^{-1}y^{-1}[[y,z^{-1}],x]xy^{2}z^{2}\circ C({^\circ}G)\\
		&=& z^{-1}(y^{-1}xy^{2})z^{2}\circ C({^\circ}G), \text{as} \;[y,z^{-1},x]\in Z(G)\subseteq C({^\circ}G)\\
		&=& ((x\circ y)\circ z)\circ C({^\circ}G)\\
		&=& (\overline{x}\circ \overline{y})\circ \overline{z}.
	\end{eqnarray*}
	Hence, ${^\circ}G/ C({^\circ}G)$ is a group.
\end{proof}
	\begin{theorem}\label{p3}
		$({^\circ}G/C(^{\circ}G), \circ)=({^\circ}(G/C(^{\circ}G)), \circ)$.
	\end{theorem}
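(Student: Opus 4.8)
The plan is to show that the two sides have the same underlying set and the same binary operation, so that the asserted equality is literal. Write $C = C({^\circ}G)$ for brevity. Recall that $C$ is at once a normal subloop of the loop ${^\circ}G$ (proved above) and, by \cite[Proposition 4.4]{lal2014weak}, a characteristic—hence normal—subgroup of the group $G$; moreover $G/C$ is a finite group, nilpotent of class at most $3$ as a quotient of $G$, so its associated gyrogroup ${^\circ}(G/C)$ is defined.

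The first step is to reconcile the two notions of coset. For $x\in G$ and $c\in C$ one computes $x^{-1}(x\circ c) = x^{-1}c^{-1}xc^{2} = (x^{-1}c^{-1}x)c^{2}$, which lies in $C$ because $C\trianglelefteq G$ (so $x^{-1}c^{-1}x\in C$) and $C$ is a subgroup; hence $x\circ c\in xC$, i.e. $x\circ C\subseteq xC$. Since ${^\circ}G$ is a loop, left translation by $x$ is a bijection of ${^\circ}G$, so $|x\circ C| = |C| = |xC|$, and finiteness of $G$ forces $x\circ C = xC$ for every $x$. Consequently the coset partition of ${^\circ}G$ by the normal subloop $C$ is exactly the coset partition of $G$ by the normal subgroup $C$, so ${^\circ}G/C$, $G/C$, and therefore ${^\circ}(G/C)$ all have the common underlying set $\{xC : x\in G\}$.

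The second step is to compare the operations on this set. On the one hand, by definition of the quotient loop, $(xC)\circ(yC) = (x\circ y)\,C = (y^{-1}xy^{2})\,C$. On the other hand, in the group $G/C$ we have $(yC)^{-1} = y^{-1}C$ and $(yC)^{2} = y^{2}C$, so in ${^\circ}(G/C)$ the operation is $(xC)\circ(yC) = (yC)^{-1}(xC)(yC)^{2} = (y^{-1}xy^{2})\,C$. These agree for all $x,y\in G$, so the two gyrogroups are identical, which is the assertion.

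I do not expect a genuine obstacle: the argument is essentially bookkeeping once the preceding propositions are in hand. The one point that deserves care—and where a careless argument could fail—is the identification of loop cosets with group cosets in the first step; this is precisely where both the normality of $C$ in $G$ and the finiteness hypothesis on $G$ are used. After that identification, equality of the operations is immediate from the two defining formulas.
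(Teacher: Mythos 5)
Your proposal is correct and follows essentially the same route as the paper: identify the $\circ$-cosets of $C({^\circ}G)$ with its group cosets and then check that both operations reduce to $(y^{-1}xy^{2})C({^\circ}G)$. The only cosmetic difference is that the paper gets the coset identification directly from $C\circ x = x^{-1}Cx^{2} = (x^{-1}Cx)x = Cx$ using normality of $C$ in $G$, whereas you prove an inclusion and then invoke finiteness and the bijectivity of translations; both are valid, and the paper additionally records (for later use) that ${^\circ}(G/C({^\circ}G))$ is in fact a group since $G/C({^\circ}G)$ has class at most $2$.
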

	\begin{proof}
		Since $G$ is a nilpotent group of class 3, $G/Z(G)$ is a nilpotent group of class 2. Now, define a group homomorphism $\phi: G/Z(G)\longrightarrow G/C(^{\circ}G)$ by $Z(G) x \mapsto C(^{\circ}G) x$. Then, one can easily observe that $\phi$ is a surjective homomorphism. Hence, $G/C(^{\circ}G)$ being the homomorphic image of the group $G/Z(G)$ is a nilpotent group of class at most 2. Therefore, by \cite[Theorem 3.6]{foguel2001gyrogroups}, the associated right gyrogroup ${^\circ}(G/C(^{\circ}G))$ is a group.
		
\vspace{0.2 cm}
		
\n Let $x, y\in {^\circ}G$. Since $C(^{\circ}G)$ is a normal subgroup of $G$, $C(^{\circ}G)\circ x = x^{-1} C(^{\circ}G) x^{2} = (x^{-1} C(^{\circ}G) x)x=C(^{\circ}G) x$. Therefore, $(C(^{\circ}G)\circ x)\circ (C(^{\circ}G)\circ y) = C(^{\circ}G)\circ (x\circ y) = C(^{\circ}G)(x\circ y)=(C(^{\circ}G)x)\circ (C(^{\circ}G)y)$. Thus, $({^\circ}G/C(^{\circ}G), \circ)=({^\circ}(G/C(^{\circ}G), \circ)$.
	\end{proof}
	
	\n Let $3$ does not divide the order of $G$. Then, by Corollary \ref{p6c} and Theorem \ref{p3}, the exact sequence 
\begin{eqnarray}\label{s5e1}
	\begin{tikzcd}
		0 \arrow[r] & Z(G) \arrow[r, "i"] & G \arrow[r, "\pi"] & G/Z(G)\arrow[r] & 1 
	\end{tikzcd}
\end{eqnarray}

\n of the groups induces the exact sequence

\begin{eqnarray}\label{s5e2}
	\begin{tikzcd}
		0 \arrow[r] & Z(^{\circ}G) \arrow[r, "i"] & ^{\circ}G \arrow[r, "\pi"] & {^\circ}(G/Z(^{\circ}G))\arrow[r] & 1 
	\end{tikzcd}
\end{eqnarray}

\n of the loops. Note that, $Z(^{\circ}G)$ and $G/Z(^{\circ}G)$ are groups. Now, $G$ can be identified with
\begin{equation}\label{s5e3}
 \{(a, \overline{x}) \mid a \in Z(G), \overline{x}\in G/Z(G) \},
\end{equation} 
\n with the binary operation defined for all $(a, \overline{x}), (b, \overline{y})\in G$ as,
\begin{equation}\label{s5e4}
(a, \overline{x})\cdot(b, \overline{y}) = (abf(\overline{x},\overline{y}), \overline{x}\overline{y}),
\end{equation}

\n where $f: G/Z(G)\times G/Z(G)\longrightarrow Z(G)$ is a normalized function given by the equation 
\begin{equation}\label{s2e5}
(\overline{x}\cdot \overline{y})\cdot \overline{z} = \overline{x}f(\overline{y},\overline{z})\cdot (\overline{y}\cdot \overline{z}),\;\; \forall\; \overline{x}, \overline{y}, \overline{z} \in G/Z(G).
\end{equation}

\n Let $\mathbf{Z}^{2}(G/Z(G), G)$ and $\mathbf{B}^{2}(G/Z(G), Z(G))$ denotes the group of all the $2$-cocycles and $2$-coboundaries associated to the group extension $G$ of the group $Z(G)$ by the group $G/Z(G)$. Then, for all $(a, \overline{x}), (b, \overline{y}) \in {^{\circ}G}$,
\begin{eqnarray*}
	(a, \overline{x})\circ (b, \overline{y}) &=& (b, \overline{y})^{-1}\cdot(a, \overline{x})\cdot (b, \overline{y})^{2}\\  &=& (b^{-1}f(\overline{y}, \overline{y}^{-1})^{-1}, \overline{y}^{-1})\cdot(a, \overline{x})\cdot(b^{2}f(\overline{y}, \overline{y}), \overline{y}^{2})\\ &=& (b^{-1}af(\overline{y}, \overline{y}^{-1})^{-1}
	f(\overline{y}^{-1}, \overline{x}), \overline{y}^{-1}\overline{x})\cdot(b^{2}f(\overline{y}, \overline{y}), \overline{y}^{2})\\ &=& (b^{-1}ab^{2}f(\overline{y}, \overline{y}^{-1})^{-1}f(\overline{y}^{-1}, \overline{x})f(\overline{y}, \overline{y})f(\overline{y}^{1}\overline{x}, \overline{y}^{2}), \overline{y}^{-1}\overline{x}\overline{y}^{2}).
\end{eqnarray*}

\n By \cite[Section 2]{ddv}, if
\begin{eqnarray}\label{s5e6}
	\begin{tikzcd}
		0 \arrow[r] & Z(L) \arrow[r, "i"] & L \arrow[r, "\pi"] & Q \arrow[r] & 1 
	\end{tikzcd}
\end{eqnarray}

\n is the central extension of the loop $L$ by the loop $Q$, then $L$ is identified with $Z(L)\times Q$ with the binary operation
\begin{equation}\label{s5e7}
(a, \overline{x})\cdot(b, \overline{y}) = (ab\phi(\overline{x},\overline{y}), \overline{x}\overline{y}),
\end{equation}

\n where $\phi:Q\times Q \rightarrow Z(L)$ is a 2-cocycle.

Thus, we define
\begin{center}
	$(a, \overline{x})\circ (b, \overline{y}) = (a\circ b\circ {^{\circ}f(\overline{x}, \overline{y})}, \overline{x}\circ \overline{y}),$
\end{center} 
where $a\circ b = b^{-1}ab^{2}$, $\overline{x}\circ \overline{y} = \overline{y}^{-1}\overline{x}\overline{y}^{2}$ and ${^{\circ}f(\overline{x}, \overline{y})} = f(\overline{y}, \overline{y}^{-1})^{-1}f(\overline{y}^{-1}, \overline{x})$ $f(\overline{y}, \overline{y})f(\overline{y}^{-1}\overline{x}, \overline{y}^{2})$, for all $a,b\in Z({^\circ}G)$ and $\overline{x}, \overline{y} \in {^\circ}(G/Z({^\circ}G))$. Since, $G$ is a nilpotent group of class 3, the associated right gyrogroup $({^\circ}G, \circ)$ is actually a gyrogroup.
\n Let $f,g \in \mathbf{Z}^{2}(G/Z(G), Z(G))$ be two 2-cocycles associated to the group extension $G$. Then there exists a map $\tau : G/Z(G) \longrightarrow Z(G)$ such that  
\begin{equation}\label{e7}
g(\overline{x}, \overline{y}) = \tau(\overline{x})\cdot \tau(\overline{y})\cdot f(\overline{x}, \overline{y})\cdot \tau(\overline{x} \cdot \overline{x})^{-1}
\end{equation}
for all $\overline{x},\overline{y} \in G/Z(G)$. Now, let ${^\circ}f, {^\circ}g$ be $2$ - cocycles associated to the loop extension ${^\circ}G$ of the group $Z(G)$ by the group ${^\circ}(G/Z(G))$. Then,
\begin{eqnarray*}
	{^{\circ}f(\overline{x}, \overline{y})} = f(\overline{y}, \overline{y}^{-1})^{-1}f(\overline{y}^{-1}, \overline{x})f(\overline{y}, \overline{y})f(\overline{y}^{-1}\overline{x}, \overline{y}^{2})\\
	\text{and}\;
	{^{\circ}g(\overline{x}, \overline{y})} = g(\overline{y}, \overline{y}^{-1})^{-1}g(\overline{y}^{-1}, \overline{x})g(\overline{y}, \overline{y})g(\overline{y}^{-1}\overline{x}, \overline{y}^{2}).
\end{eqnarray*}
Since, $Z(G)$ is the center of the group $G$, using the Equation \ref{e7}, we get
\begin{equation}\label{e5}
{^\circ}g(x, y) = \tau(\overline{x})\circ \tau(\overline{y})\circ {{^\circ}f}(\overline{x}, \overline{y})\circ (\tau(\overline{x} \cdot \overline{y}))^{-1}.
\end{equation}
Thus, we define a map ${^\circ}\tau : {^\circ}(G/Z(G))\longrightarrow Z(G)$ by ${^\circ}\tau(\overline{x}) = \tau(\overline{x})$ for all $\overline{x}\in {^\circ}(G/Z(G))$. We will denote the map ${^\circ}\tau$ by the map $\tau$.

\vspace{.2cm}

\n Let ${^\circ}\mathbf{Z}^{2}({^{\circ}(G/Z(G))}, Z(G))$(we will write ${^\circ}\mathbf{Z}^{2}$ in short) be the collection of all associated 2-cocycles ${^\circ}f$ associated to the loop extensions ${^{\circ}G}$ of the normal subloop $Z(G)$ by the group ${^{\circ}(G/Z(G))}$. Then we define a relation $\sim$ on the set ${^\circ}\mathbf{Z}^{2}$ as, for any two 2-cocycles ${^{\circ}f}, {^{\circ}g}\in {^\circ}\mathbf{Z}^{2}$, we say that ${^{\circ}f}\sim {^{\circ}g}$ if there exists a normalized map $\tau: {^{\circ}(G/Z(G))}\longrightarrow Z(G)$ satisfying the Equation \ref{e5}. One can easily observe that $\sim$ is an equivalence relation on the set ${^\circ}\mathbf{Z}^{2}$. Let ${^\circ \mathbf{H}^{2}}({^{\circ}(G/Z(G))}, Z(G)) =$ \{$[{^{\circ}f}]\mid {^{\circ}f}\in {^\circ}\mathbf{Z}^{2}$\} be the set of all the equivalence classes $[{^{\circ}f}]$ of the elements in ${^\circ}\mathbf{Z}^{2}$. One can observe that, ${^\circ}\mathbf{H}^{2}\subseteq \mathbf{H}^{2}$.

	
\section{Nilpotency Class of the Associated Gyrogroup ${^\circ}G$}
\n In this section, we will find the niplotency class of the associated gyrogroup ${^\circ}G$. It was proved in \cite[Proposition 4.5, 1450058-11]{lal2014weak} that a group $G$ is a nilpotent group of class $2$ if and only if $^{\circ}G$ is a nilpotent group of class atmost $2$. It should be noted that the word ``atmost" is missed in that proposition. 

\begin{theorem}\label{p5}
	Let $G$ be a group such that $3$ does not divide the order of $G$. Then $G$ is a nilpotent group of class exactly $2$ if and only if ${^{\circ}G}$ is a nilpotent group of class exactly $2$.
\end{theorem}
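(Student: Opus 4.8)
The plan is to reduce the statement to the behaviour of the commutant $C({^\circ}G)$, using that for a group $G$ of nilpotency class at most $2$ the loop ${^\circ}G$ is already a group (by \cite[Theorem 3.6]{foguel2001gyrogroups}), so that the loop-theoretic and group-theoretic notions of nilpotency class for ${^\circ}G$ coincide. The key observation is elementary: a loop $L$ is abelian, i.e.\ commutative and associative, exactly when $C(L)=L$; hence ``nilpotent of class exactly $2$'' amounts to ``nilpotent of class at most $2$, together with $C(L)\neq L$''.

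For the forward implication, assume $G$ is nilpotent of class exactly $2$ and $3\nmid|G|$. Since $G$ has class at most $2$, it is central-by-$2$-Engel, so ${^\circ}G$ is a gyrogroup, and by \cite[Theorem 3.6]{foguel2001gyrogroups} it is in fact a group; by \cite[Proposition 4.5]{lal2014weak} (or, alternatively, by Theorem~\ref{p3} together with Corollary~\ref{p6c} and the fact that $G/Z(G)$ is abelian) this group has nilpotency class at most $2$. It remains to rule out class $\le 1$: the group ${^\circ}G$ is abelian iff $C({^\circ}G)={^\circ}G$, but Theorem~\ref{p6} gives $C({^\circ}G)=Z(G)$, and $Z(G)\neq G$ because $G$ is non-abelian. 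Hence ${^\circ}G$ is a nilpotent group of class exactly $2$.

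For the converse, assume ${^\circ}G$ is a nilpotent group of class exactly $2$. Being a group, ${^\circ}G$ forces, via \cite[Theorem 3.6]{foguel2001gyrogroups}, that $G$ is nilpotent of class at most $2$. If $G$ were abelian, then $x\circ y=y^{-1}xy^{2}=xy$ for all $x,y\in G$, so ${^\circ}G=G$ would be abelian, contradicting the hypothesis that its class is exactly $2$; therefore $G$ has class exactly $2$. (This direction needs no hypothesis on the order of $G$.)

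The only genuine subtlety is the word ``exactly'' in the forward direction, and here the assumption $3\nmid|G|$ is essential rather than cosmetic. Indeed a direct computation shows that in a group of class at most $2$ one has $x\circ y=xy[x,y]$, so ${^\circ}G$ is abelian precisely when $[x,y]^{3}=1$ for all $x,y\in G$; for example the Heisenberg group of order $27$ has class exactly $2$ but abelian ${^\circ}G$. The hypothesis $3\nmid|G|$ enters exactly to guarantee, through Theorem~\ref{p6} and Corollary~\ref{p6c}, that $C({^\circ}G)=Z(G)$, a proper subgroup whenever $G$ is non-abelian; beyond invoking this at the right moment, no real obstacle arises.
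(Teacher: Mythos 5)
Your proof is correct and follows essentially the same route as the paper: reduce to class at most $2$ via \cite[Theorem 3.6]{foguel2001gyrogroups}, then use Theorem~\ref{p6}/Corollary~\ref{p6c} (which identify $C({^\circ}G)=Z({^\circ}G)=Z(G)$) to rule out class $1$ when $G$ is non-abelian. You are slightly more careful than the paper in the converse direction, where you explicitly exclude the possibility that $G$ is abelian (the paper omits this easy step), and your Heisenberg-group remark correctly pinpoints why $3\nmid|G|$ is needed only for the forward implication.
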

\begin{proof}
	Let $G$ be a nilpotent group of class exactly $2$ such that $3$ does not divide the order of $G$. Then the associated right gyrogroup ${^{\circ}G}$ is a group of class atmost $2$. If class of ${^{\circ}G}$ is $1$, then $Z({^{\circ}G})={^{\circ}G}$. By Corollary \ref{p6c}, $Z(G)=Z({^{\circ}G})$. This means that $G=Z(G)$. Therefore, $G$ is of class $1$. Conversely, let ${^\circ}G$ be a nilpotent group of class $2$. Then, by \cite[Theorem 3.6]{foguel2001gyrogroups}, $G$ is a nilpotent group of class $2$.
\end{proof}

\begin{theorem}\label{4.2}
Let $G$ be a group such that $3$ does not divide the order of $G$. Then $G$ is a nilpotent group of class $3$ if and only if ${^{\circ}G}$ is a nilpotent loop of class $3$.
\end{theorem}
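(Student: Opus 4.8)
The plan is to prove the two implications separately, using the structural results already established. For the forward direction, assume $G$ is nilpotent of class exactly $3$ with $3 \nmid |G|$. Since $G$ has class $3$, the associated right gyrogroup ${}^{\circ}G$ is a genuine gyrogroup (not a group), because by \cite[Theorem 3.6]{foguel2001gyrogroups} ${}^{\circ}G$ is a group precisely when $G$ has class at most $2$; hence the nilpotency class of ${}^{\circ}G$ as a loop is at least $2$. One must then show it is not class $2$ and is indeed class $3$. The key is to locate the appropriate terms of the (loop) lower central series of ${}^{\circ}G$ and compare them with those of $G$. The natural tool is Theorem \ref{p3} together with the central extension $0 \to Z({}^{\circ}G) \to {}^{\circ}G \to {}^{\circ}(G/Z({}^{\circ}G)) \to 1$ of loops from \eqref{s5e2}: since $3 \nmid |G|$, Corollary \ref{p6c} gives $Z({}^{\circ}G) = Z(G)$, so the quotient is ${}^{\circ}(G/Z(G))$, and $G/Z(G)$ is a group of class exactly $2$ (as $G$ has class exactly $3$). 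By Theorem \ref{p5} applied to $G/Z(G)$ — whose order still is not divisible by $3$ — the quotient loop ${}^{\circ}(G/Z(G))$ is a group of class exactly $2$. Thus ${}^{\circ}G$ is a central extension of a group of class $2$ by an abelian group, forcing the loop nilpotency class of ${}^{\circ}G$ to be at most $3$; and it is strictly greater than $2$ since ${}^{\circ}G$ is not associative. This pins it at exactly $3$.

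More concretely, I would argue: if ${}^{\circ}G$ had loop-nilpotency class $\le 2$, then in particular ${}^{\circ}G$ would have to be a group (a class-$2$ nilpotent loop in the relevant sense is associative here, or: its associator subloop lies in the center, and one checks directly via the associator identity in Proposition \ref{p4}'s computation that this collapses the non-associativity), contradicting \cite[Theorem 3.6]{foguel2001gyrogroups} since $G$ has class $3$. For the upper bound, the central series $\{1\} \le Z(G) \le \pi^{-1}(Z({}^{\circ}(G/Z(G)))) \le {}^{\circ}G$ is a central series of the loop ${}^{\circ}G$ of length $3$: the bottom quotient is $Z(G)$ (central by construction), the top quotient is ${}^{\circ}(G/Z(G))\,/\,Z({}^{\circ}(G/Z(G)))$ and the middle piece comes from the class-$2$ group structure of ${}^{\circ}(G/Z(G))$ established above. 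Verifying that each successive quotient is central in the appropriate sense for loops (i.e.\ lands in the center of the quotient loop) is where the associator bookkeeping enters, but it follows formally from the fact that ${}^{\circ}(G/Z(G))$ is an honest group of class $2$.

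For the converse, assume ${}^{\circ}G$ is a nilpotent loop of class exactly $3$. If $G$ had class $\le 2$, then by \cite[Theorem 3.6]{foguel2001gyrogroups} ${}^{\circ}G$ would be a group, and by \cite[Proposition 4.5]{lal2014weak} (as corrected in the remark preceding Theorem \ref{p5}) it would be a nilpotent group of class at most $2$ — contradicting class exactly $3$. On the other hand $G$ cannot have class $\ge 4$ under the standing hypothesis that $G$ is a finite nilpotent group of class $3$ (the paper's blanket assumption), but to argue it cleanly without invoking the blanket assumption one notes that ${}^{\circ}G$ being a loop at all already requires $G$ to be central-by-(2-Engel), and combined with finiteness and $3 \nmid |G|$ one shows class $\ge 4$ would push the loop class of ${}^{\circ}G$ above $3$, again via the extension in \eqref{s5e2}. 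Hence $G$ has class exactly $3$.

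I expect the main obstacle to be the upper bound in the forward direction: making precise the claim that a central extension of a class-$2$ group by an abelian group yields a loop of nilpotency class at most $3$ in the loop-theoretic sense. This requires checking that the candidate series $\{1\} \trianglelefteq Z(G) \trianglelefteq \pi^{-1}(Z({}^{\circ}(G/Z(G)))) \trianglelefteq {}^{\circ}G$ consists of normal subloops with central successive quotients — i.e.\ that each associator $A(x,y,z)$ and each commutator ${}^{\circ}[x,y]$ with one argument in a given term lands in the previous term. These computations are analogous to (but somewhat more involved than) those in Propositions \ref{p4} and \ref{zg} and in Lemma \ref{l1}, and rely crucially on the identities $[G,G,G] \subseteq Z(G)$ and $C({}^{\circ}G) = Z(G)$ (Theorem \ref{p6}) to keep the commutator calculus under control.
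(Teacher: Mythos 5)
Your overall route is the paper's: identify ${^\circ}G/Z({^\circ}G)$ with ${^\circ}(G/Z(G))$ via Theorem \ref{p3} and Corollary \ref{p6c}, apply Theorem \ref{p5} to the class-$2$ group $G/Z(G)$ to conclude that this quotient is a group of nilpotency class exactly $2$, and for the converse pull the conclusion back through \cite[Theorem 3.6]{foguel2001gyrogroups}. But at that point you are already done: for a nontrivial nilpotent loop $L$ the upper central series gives $\mathrm{cl}(L)=\mathrm{cl}(L/Z(L))+1$, so $\mathrm{cl}({^\circ}G)=3$ immediately. That is where the paper stops. Your hand-built series $\{1\}\trianglelefteq Z(G)\trianglelefteq \pi^{-1}(Z({^\circ}(G/Z(G))))\trianglelefteq {^\circ}G$ is the upper central series in disguise, and the ``associator bookkeeping'' you flag as the main obstacle is not actually needed.

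The genuine problem is your separate lower-bound argument. You claim that if ${^\circ}G$ had loop nilpotency class $\le 2$ it would have to be associative, hence a group, contradicting \cite[Theorem 3.6]{foguel2001gyrogroups}. That implication is false, and the paper itself supplies counterexamples of exactly this shape: by Corollary \ref{2e}, if $G$ is a $2$-Engel group of class exactly $3$ then ${^\circ}G$ is a nilpotent loop of class $2$ which is \emph{not} a group. (Such $G$ necessarily have order divisible by $3$, so they do not contradict the theorem, but they do refute your general principle.) Your parenthetical fallback --- that the associators lying in the center collapses non-associativity --- is also wrong: Proposition \ref{asct} shows $\mathcal{A}({^\circ}G)\subseteq Z(G)$ always, yet ${^\circ}G$ is non-associative whenever $G$ has class $3$. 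The correct ways to rule out class $\le 2$ under the hypothesis $3\nmid|G|$ are either the upper-central-series identity above, or Theorem \ref{cl}: class $\le 2$ would force $[x,y]^{3}\in C({^\circ}G)=Z(G)$ for all $x,y$, and since an element of $G/Z(G)$ whose cube is trivial must itself be trivial when $3\nmid|G|$, this gives $[x,y]\in Z(G)$ and hence $\mathrm{cl}(G)\le 2$, a contradiction. Your converse direction is essentially the paper's and is fine.
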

\begin{proof}
Let $G$ be a nilpotent group of class $3$ such that $3$ does not divide the order of $G$. Then $G/Z(G)$ is a  nilpotent group of class $2$.
By Theorem \ref{p5}, the associated right gyrogroup ${^{\circ}(G/Z(G))}$ is a nilpotent group of class $2$. Also, by the Theorem \ref{p3} and Corollary \ref{p6c}, ${^{\circ}G}/Z({^{\circ}G})$ is a group of class $2$. Hence, ${^{\circ}G}$ is a nilpotent loop of class $3$. Conversely, let ${^{\circ}G}$ be a nilpotent loop of class $3$. Then, ${^{\circ}G}/Z({^{\circ}G}) = ^{\circ}(G/Z(G))$ is a nilpotent group of class $2$. Hence, by \cite[Theorem 3.6]{foguel2001gyrogroups}, $G/Z(G)$ is a nilpotent group of class $2$. Therefore, $G$ is a nilpotent group of class $3$.
\end{proof}

	\begin{theorem}\label{cl}
	If $G$ is a nilpotent group of class $3$, then ${^\circ}G$ is nilpotent loop of class 2 if and only if $[x,y]^{3}\in C(^{\circ}G)$ for all $x,y \in G$.
\end{theorem}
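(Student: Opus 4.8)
The plan is to run the argument through the quotient ${}^{\circ}G/Z({}^{\circ}G)$. Recall that a loop $L$ is nilpotent of class at most $2$ precisely when $L/Z(L)$ is an abelian group, so it suffices to determine when ${}^{\circ}G/Z({}^{\circ}G)$ is an abelian group. First I would record that $Z(G)\subseteq Z({}^{\circ}G)$: this follows from $Z(G)\subseteq C({}^{\circ}G)$ (used already in the proof of Theorem \ref{p6}) together with the remark, made just before Proposition \ref{ln}, that $Z(G)$ lies in all three nuclei of ${}^{\circ}G$. Since $Z(G)$ is a normal subgroup of $G$ that is central in ${}^{\circ}G$, hence a normal subloop, the same computation as in Theorem \ref{p3} gives ${}^{\circ}G/Z(G)={}^{\circ}(G/Z(G))$; and as $G/Z(G)$ has nilpotency class at most $2$, \cite[Theorem 3.6]{foguel2001gyrogroups} shows this quotient loop is in fact a group. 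Because $Z({}^{\circ}G)\supseteq Z(G)$, the subloop $Z({}^{\circ}G)/Z(G)$ of the group ${}^{\circ}G/Z(G)$ is a normal subgroup, so ${}^{\circ}G/Z({}^{\circ}G)=({}^{\circ}G/Z(G))/(Z({}^{\circ}G)/Z(G))$ is a group. Hence ${}^{\circ}G$ is nilpotent of class at most $2$ if and only if this group is commutative, i.e. if and only if the loop commutator ${}^{\circ}[x,y]$ lies in $Z({}^{\circ}G)$ for all $x,y\in G$.

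Next I would compute ${}^{\circ}[x,y]$ modulo $Z(G)$. A short direct calculation shows that in any group $H$ of class at most $2$ the associated gyrogroup ${}^{\circ}H$ is a group in which ${}^{\circ}[a,b]$ equals the third power of the group commutator of $a$ and $b$ (one uses that $a\circ b$ and $ab$ differ by a central commutator and that the $\circ$-inverse agrees with the ordinary inverse). Applying this to $H=G/Z(G)$ and pulling back along the loop epimorphism ${}^{\circ}G\to{}^{\circ}(G/Z(G))$, which preserves loop commutators, one obtains ${}^{\circ}[x,y]\equiv[x,y]^{3}\pmod{Z(G)}$; that is, ${}^{\circ}[x,y]=[x,y]^{3}z$ for some $z\in Z(G)\subseteq Z({}^{\circ}G)$. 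As $z$ is central in $G$ we have $[x,y]^{3}z=[x,y]^{3}\circ z$, and $Z({}^{\circ}G)$ is a subloop containing $z$, so ${}^{\circ}[x,y]\in Z({}^{\circ}G)$ if and only if $[x,y]^{3}\in Z({}^{\circ}G)$.

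It remains to pass from $Z({}^{\circ}G)$ to $C({}^{\circ}G)$. By Proposition \ref{lmrn} and Corollary \ref{nunl}, $N({}^{\circ}G)=N_{\mu}({}^{\circ}G)$, and the description of the middle nucleus obtained in the proof that $N_{\mu}({}^{\circ}G)$ is a subgroup of $G$, namely $a\in N_{\mu}({}^{\circ}G)$ iff $[x,[a,y^{-1}]]=1$ for all $x,y$, shows that $N_{\mu}({}^{\circ}G)=\{a\in G:[a,g]\in Z(G)\ \text{for all}\ g\in G\}$, the second centre $Z_{2}(G)$ of $G$; hence $Z({}^{\circ}G)=C({}^{\circ}G)\cap Z_{2}(G)$. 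On the other hand, since $G$ has class $3$, every group commutator satisfies $[[x,y],g]\in[G,G,G]\subseteq Z(G)$, so $[x,y]\in Z_{2}(G)$ and therefore $[x,y]^{3}\in Z_{2}(G)$ for all $x,y$; consequently $[x,y]^{3}\in Z({}^{\circ}G)$ if and only if $[x,y]^{3}\in C({}^{\circ}G)$. Combining the three steps, ${}^{\circ}G$ is nilpotent of class at most $2$ if and only if $[x,y]^{3}\in C({}^{\circ}G)$ for all $x,y\in G$. Finally, since $G$ has class exactly $3$ it is not of class at most $2$, so by \cite[Theorem 3.6]{foguel2001gyrogroups} ${}^{\circ}G$ is not a group, hence not of class at most $1$; thus, under the cube condition, ${}^{\circ}G$ has class exactly $2$, which is the assertion.

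I expect the main obstacle to be the explicit identity ${}^{\circ}[a,b]=[a,b]^{3}$ in a group of class at most $2$ — an elementary but somewhat delicate collection computation — together with the loop-theoretic bookkeeping needed to justify ${}^{\circ}G/Z(G)={}^{\circ}(G/Z(G))$ and the identification of ${}^{\circ}G/Z({}^{\circ}G)$ as a quotient of the group ${}^{\circ}G/Z(G)$.
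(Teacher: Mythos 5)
Your proposal is correct, and it follows the same overall skeleton as the paper (reduce ``class $2$'' to ``all loop commutators lie in $Z({^\circ}G)$'', show ${^\circ}[x,y]$ is $[x,y]^{3}$ times something central, then trade $Z({^\circ}G)$ for $C({^\circ}G)$), but the way you obtain the key identity is genuinely different. The paper computes ${^\circ}[x,y]=(x\circ y)\circ(y\circ x)^{-1}$ directly in the class-$3$ group by a long collection argument, arriving at the exact formula ${^\circ}[x,y]=[x,y]^{3}[x,[x,y]]^{2}[y,[x,y]]^{2}$ (Equation \ref{ec}); you instead do the easy class-$2$ computation ${^\circ}[a,b]=[a,b]^{3}$ in $G/Z(G)$ (which I checked: $a\circ b=ab[a,b]$, $b\circ a=ab[a,b]^{-2}$, so ${^\circ}[a,b]=[a,b]^{3}$ since $[a,b]$ is central) and pull it back along the commutator-preserving epimorphism ${^\circ}G\to{^\circ}(G/Z(G))$, getting the formula only modulo $Z(G)$. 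What the paper's route buys is the exact identity \ref{ec}, which is reused later (in Propositions \ref{GNab} and \ref{comm}); what your route buys is the avoidance of the delicate class-$3$ collection, plus two points of rigor the paper leaves implicit: the passage from $[x,y]^{3}\in Z({^\circ}G)$ to $[x,y]^{3}\in C({^\circ}G)$, which you justify via Proposition \ref{zg} together with the identification $N_{\mu}({^\circ}G)=Z_{2}(G)\supseteq[G,G]$, and the fact that the class is exactly $2$ rather than at most $2$, which you settle by noting ${^\circ}G$ is not a group when $G$ has class exactly $3$. Your reduction of ``class at most $2$'' to abelianness of ${^\circ}G/Z({^\circ}G)$, including the verification that this quotient is a group via ${^\circ}G/Z(G)={^\circ}(G/Z(G))$, is also sound (the paper gets the group property separately from $\mathcal{A}({^\circ}G)\subseteq Z(G)$ in Proposition \ref{asct}).
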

\begin{proof}
	Let $x, y\in {^\circ}G$. Since $x\circ y = {^\circ}[x, y]\circ (y\circ x)$, 
	\begin{eqnarray*}
		{^\circ}[x,y] &=& (x\circ y)\circ (y\circ x)^{-1}\\
		&=& y^{-1}xy^{2}\circ (x^{-1}yx^{2})^{-1}\\
		&=& x^{-1}yx^{2}y^{-1}xy^{2}(x^{-2}y^{-1}x)^{2}\\
		&=& x^{-1}yx^{2}y^{-1}xy^{2}x^{-2}y^{-1}x^{-1}y^{-1}x\\
		&=& (x^{-1}yx)(xy^{-1})(xy^{2}x^{-2}y^{-1})(x^{-1}y^{-1}x)\\
		&=& (x^{-1}yxy^{-1})(yxy^{-1}x^{-1})(x^{2}y^{2}x^{-2}y^{-2})(yx^{-1}y^{-1}x)\\
		&=& [x^{-1}, y][y,x][x^{2}, y^{2}][y,x^{-1}]\\
		&=& [y,x][x^{2},y^{2}], \text{as $[G,G]$ is abelian}\\
		&=& [y,x][x^{2},y]^{2}[y,[x^{2},y]], \text{by Lemma \ref{l3} ($ii$) and}\; [G,G,G]\subseteq Z(G)
		\\
		&=& [y,x]([x,[x,y]][x,y]^{2})^{2}[y, [x,[x,y]][x,y]^{2}], \text{using Lemma \ref{l3} ($i$)},\\
		&=& [y,x][x,[x,y]]^{2}[x,y]^{4}[y,[x,[x,y]]][[x,[x,y]], [y, [x,y]^{2}]][y, [x,y]^{2}]\\
		&=& [y,x][x,[x,y]]^{2}[x,y]^{4}[y, [x,y]^{2}].
	\end{eqnarray*}
Thus,
\begin{equation}\label{ec}
{^\circ}[x,y] = [x,y]^{3}[x,[x,y]]^{2}[y, [x,y]]^{2}.
\end{equation}
Now, ${^\circ}G$ is nilpotent loop of class 2 $\iff {^\circ}G/Z({^\circ}G)$ is abelian $\iff {^\circ}[x,y]\in Z({^\circ}G)$ for all $x,y\in {^\circ}G$. Since $G$ is a nilpotent group of class 3, $[G,G,G]\subseteq Z(G)$ and $Z(G)\subseteq Z({^\circ}G)\subseteq C(^{\circ}G)$, ${^\circ}G$ is nilpotent loop of class 2 $\iff [x,y]^{3}\in C(^{\circ}G)$.
\end{proof}
\begin{corollary}\label{2e}
	Let $G$ be a 2- Engel group. Then, ${^\circ}G$ is a nilpotent loop of class $2$.
\end{corollary}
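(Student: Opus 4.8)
My plan is to reduce everything to Theorem~\ref{cl}. Recall that a $2$-Engel group $G$ is, by a classical theorem of Levi, nilpotent of class at most $3$, and moreover $[G,G,G]$ has exponent dividing $3$; these are the only external facts I would use, and I would cite rather than reprove them. Two cases then arise according to the nilpotency class of $G$.

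If $G$ has class at most $2$, then by \cite[Theorem 3.6]{foguel2001gyrogroups} the associated right gyrogroup $^{\circ}G$ is itself a group of nilpotency class at most $2$, hence in particular a nilpotent loop of class $2$, and there is nothing more to do. So the substantive case is when $G$ has class exactly $3$, where Theorem~\ref{cl} applies: it asserts that $^{\circ}G$ is a nilpotent loop of class $2$ precisely when $[x,y]^{3}\in C(^{\circ}G)$ for all $x,y\in G$. Since one always has $Z(G)\subseteq Z(^{\circ}G)\subseteq C(^{\circ}G)$, it suffices to prove the stronger statement that $[x,y]^{3}\in Z(G)$ for all $x,y\in G$.

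To establish this, fix $x,y,z\in G$ and set $c=[x,y]$. Because $G$ has class $3$, the element $[c,z]=[x,y,z]$ lies in $\gamma_{3}(G)\subseteq Z(G)$, so expanding $[c^{3},z]$ by the identity $[ab,z]=[a,z]^{b}[b,z]$ all the conjugations collapse and one gets $[c^{3},z]=[c,z]^{3}=[x,y,z]^{3}$. By Levi's theorem $[x,y,z]^{3}=1$, so $[c^{3},z]=1$ for every $z$, i.e.\ $[x,y]^{3}\in Z(G)$. Hence $[x,y]^{3}\in C(^{\circ}G)$ for all $x,y$, and Theorem~\ref{cl} yields that $^{\circ}G$ is a nilpotent loop of class $2$.

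I do not expect a genuine obstacle: the computation of $[[x,y]^{3},z]$ is a one-line manipulation once $\gamma_{3}(G)$ is known to be central, and the whole point is simply to recognize that the hypothesis of Theorem~\ref{cl} is automatic for $2$-Engel groups by the exponent-$3$ property of $\gamma_{3}$. The only mild point of care is the degenerate case of class at most $2$, which must be dispatched separately since Theorem~\ref{cl} is stated under the assumption that $G$ has class exactly $3$; but that case follows immediately from \cite[Theorem 3.6]{foguel2001gyrogroups}.
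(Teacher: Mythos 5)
Your proposal is correct and follows essentially the same route as the paper: both invoke Levi's theorem to get class at most $3$ and $[x,y,z]^{3}=1$, deduce $[[x,y]^{3},z]=1$ so that $[x,y]^{3}\in Z(G)\subseteq C(^{\circ}G)$, and conclude via Theorem~\ref{cl}. Your explicit handling of the degenerate case where $G$ has class at most $2$ is a small point of extra care that the paper glosses over, but it does not change the substance of the argument.
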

\begin{proof}
		Let $G$ be a 2 - Engel group. Then, $G$ is of class atmost $3$ and $[x,y,z]^{3} = 1$ for all $x,y,z \in G$. Therefore, $[[x,y]^{3},z] = 1$ which implies that $[x,y]^{3}\in Z(G)$. Thus, $[x,y]^{3}\in C(^{\circ}G)$ and the corollary follows from the Theorem \ref{cl}. 
\end{proof}
\begin{corollary}
	Let $G$ be a group of exponent $3$, then ${^\circ}G$ is a nilpotent loop of class $2$.
\end{corollary}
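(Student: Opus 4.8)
The plan is to reduce everything to Theorem \ref{cl}. The key observation is that in a group $G$ of exponent $3$ every element is a cube root of the identity, so in particular $[x,y]^{3} = 1$ for all $x,y \in G$; hence trivially $[x,y]^{3} \in Z(G) \subseteq C(^{\circ}G)$, which is exactly the hypothesis needed in Theorem \ref{cl}.

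First I would pin down the nilpotency class of $G$ itself, since the standing assumption of the paper is that $G$ is nilpotent of class $3$ (and Theorem \ref{cl} is phrased for class exactly $3$). A group of exponent $3$ is nilpotent of class at most $3$ — this is the classical Levi--van der Waerden result on the Burnside problem of exponent $3$ — so $G$ does fall within the framework. If $G$ has class at most $2$, then by \cite[Theorem 3.6]{foguel2001gyrogroups} (recalled in the introduction) $^{\circ}G$ is a nilpotent \emph{group} of class at most $2$, and there is nothing more to prove. If $G$ has class exactly $3$, then Theorem \ref{cl} applies directly, and since $[x,y]^{3} = 1 \in C(^{\circ}G)$ for all $x,y \in G$, it yields at once that $^{\circ}G$ is a nilpotent loop of class $2$.

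An alternative, and perhaps cleaner, route is to invoke Corollary \ref{2e}: every group of exponent $3$ satisfies the $2$-Engel identity $[x,y,y]=1$ (a classical fact, again essentially due to Levi), so $G$ is a $2$-Engel group and the statement is an immediate corollary of \ref{2e}. I would likely present the proof this way, with the exponent-$3$ $\Rightarrow$ $2$-Engel remark as the only input, and keep the computation via \eqref{ec} in reserve as the underlying mechanism.

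I do not expect any genuine obstacle here; the result is a corollary in the strict sense. The only two points requiring a word of care are (i) the external input that exponent-$3$ groups are nilpotent of class $\le 3$ (equivalently $2$-Engel), which one should cite rather than prove, and (ii) the borderline case in which $G$ already has class $\le 2$, which must be handled separately from Theorem \ref{cl} via the Foguel--Ungar theorem since that theorem is stated for class exactly $3$.
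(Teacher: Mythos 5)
Your proposal is correct and your preferred route --- invoking Corollary \ref{2e} together with the classical fact that every group of exponent $3$ is a $2$-Engel group --- is exactly the paper's own proof. Your alternative direct argument via Theorem \ref{cl} (using $[x,y]^{3}=1$) and your care about the class $\le 2$ case are sound but not needed beyond what the paper does.
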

\begin{proof}
	Follows immediately from the Corollary \ref{2e} and the fact that every group of exponent 3 is a 2 - Engel group.
\end{proof}


\section{Problem of Abelian Inner Mapping Groups}

\n It was an open problem whether there exists a loop of nilpotency class $3$ with abelian inner mapping group. Csorgo in \cite{pcos} gave its answer in affirmative by giving a loop of order $2^7$. This problem is still open for odd case, that is whether there exists an odd order loop of nilpotency class $3$ whose inner mapping group is abelian. In this section, we investigate when one hopes to find its answer of this problem for the loop of order $3^n$ for some positive integer $n$.

\n In a talk of the first conference on Artificial Intelligence and Theorem Proving in the year 2016, M. Kinyon along with B. Veroff gives the following theorem (see http://aitp-conference.org/2016/slides/Kinyon \textunderscore Obergurgl.pdf)

\begin{theorem} (Unpublished)\label{knth}
	Let $Q$ be a loop. Then
	\begin{itemize}
		\item[$(i)$] If $Q/N(Q)$ is an abelian group, $Q/Z(Q)$ is a group and $K(\cdot, \cdot)$ is associative, then $Inn(Q)$ is abelian. 
		\item[($ii$)] If $Inn(Q)$ is abelian, then the loop commutator is associative. 
	\end{itemize}
	\end{theorem}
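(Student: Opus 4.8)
The plan is to work inside the multiplication group $\mathrm{Mlt}(Q)$, the subgroup of $\mathrm{Sym}(Q)$ generated by all left translations $L_x\colon z\mapsto xz$ and right translations $R_x\colon z\mapsto zx$; then $Inn(Q)$ is the stabiliser of the identity and is generated by the standard inner mappings $T_x=R_x^{-1}L_x$, $L_{x,y}=L_{xy}^{-1}L_xL_y$ and $R_{x,y}=R_{yx}^{-1}R_xR_y$. The idea for part $(i)$ is to convert each hypothesis into a placement statement: ``$Q/N(Q)$ abelian'' forces every loop commutator $[x,y]$ into the nucleus $N(Q)$; ``$Q/Z(Q)$ a group'' forces every associator $A(x,y,z)$ into the centre $Z(Q)$; and associativity of $K(\cdot,\cdot)$, which we read as the loop commutator operation $(x,y)\mapsto[x,y]$ (consistently with part $(ii)$), gives $[[x,y],z]=[x,[y,z]]$. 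One then shows the generators of $Inn(Q)$ commute pairwise.

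The first reduction is that, when all associators are central, $A(x,y,\cdot)$ is constant on cosets of $Z(Q)$, so each $L_{x,y}$ and each $R_{x,y}$ has the form $z\mapsto c(z)z$ with $c(z)\in Z(Q)$; maps of this shape commute with one another, since the central multipliers commute and are unaffected by a central translation of the argument. Thus $L_{x,y}$ and $R_{x,y}$ lie in an abelian normal part of $Inn(Q)$, and it remains to handle the $T$'s. Using ``$[x,z]\in N(Q)$'' one obtains the clean formula $T_x(z)=[x,z]z$, so $T_x$ is again of the form $z\mapsto c(z)z$, now with $c(z)=[x,z]\in N(Q)$. I would then expand $T_xT_yT_x^{-1}T_y^{-1}(z)$, using the defining identities $xy=[x,y](yx)$ and $(xy)z=A(x,y,z)(x(yz))$ and the nuclear/central placements to reassociate freely inside subexpressions, collapsing the result to a product of iterated commutators $[[x,y],z],[x,[y,z]],\ldots$; associativity of $K$ then makes this product trivial, so $[T_x,T_y]=\mathrm{id}$. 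The same bookkeeping yields $[T_x,L_{u,v}]=[T_x,R_{u,v}]=\mathrm{id}$, whence $Inn(Q)$ is abelian.

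For part $(ii)$ the plan is to run this computation in reverse. Assuming $Inn(Q)$ is abelian, one first records the resulting structural consequences (in particular that loop words of bounded complexity already lie in $N(Q)$ and $Z(Q)$, so the explicit formulas above are available), and then exhibits inner mappings $\sigma,\tau\in Inn(Q)$ and an element $w\in Q$, all built from $x,y,z$ and the translations $T_x,T_y,T_{[x,y]}$, such that the element measuring the failure of $[\cdot,\cdot]$ to associate at $(x,y,z)$ coincides with the displacement of $w$ by the commutator $\bigl(\sigma\tau\sigma^{-1}\tau^{-1}\bigr)$. Since $Inn(Q)$ is abelian this commutator is the identity, so the displacement is trivial and the associativity defect vanishes; that is, the loop commutator is associative. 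The substantive work is locating the correct $\sigma,\tau,w$, which can be read off from the expansion used in part $(i)$ by isolating the single commutator-of-generators responsible for the iterated-commutator term.

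The step I expect to be the real obstacle is the bookkeeping in part $(i)$: verifying that every mixed commutator of generators is trivial forces one to move loop words repeatedly between $N(Q)$ and $Z(Q)$ and to invoke precisely the right instance of associativity of $K$, and the expansions of $T_xT_yT_x^{-1}T_y^{-1}(z)$ and of $[T_x,L_{u,v}](z)$ are long and error-prone. A disciplined human proof would first establish the two reductions ``modulo $Z(Q)$ it suffices to test identities on generators'' and ``modulo $N(Q)$ it suffices to control the commutator subloop'', after which only a finite check remains; the original Kinyon--Veroff argument discharges that check by automated deduction, and carrying it out by hand is the one genuinely laborious part of the proof.
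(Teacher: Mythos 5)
First, a point of comparison: the paper contains no proof of this theorem. It is quoted verbatim as an unpublished result of Kinyon and Veroff from a 2016 conference talk and is used in Section 5 only as a black box, so your proposal has to stand entirely on its own. As an outline of part $(i)$ it begins correctly: $Q/N(Q)$ abelian does place every commutator $[x,y]$ in $N(Q)$, $Q/Z(Q)$ a group does place every associator $A(x,y,z)$ in $Z(Q)$, and from these one indeed gets $T_x(z)=[x,z]z$ and $L_{x,y}(z)=A(x,y,z)^{-1}z$ with $A(x,y,\cdot)$ constant on $Z(Q)$-cosets, so the $L_{x,y}$ and $R_{x,y}$ commute among themselves by your central-multiplier argument. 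But the entire content of the theorem sits in the two computations you defer, $[T_x,T_y]=\mathrm{id}$ and $[T_x,L_{u,v}]=\mathrm{id}$: the multiplier $[x,z]$ lies only in $N(Q)$, not in $Z(Q)$, so the easy commuting argument does not apply to the $T$'s, and you neither exhibit the expansion of $T_xT_yT_x^{-1}T_y^{-1}(z)$ nor show that it collapses to iterated commutators alone (rather than mixed commutator--associator words), nor identify which instance of $[[x,y],z]=[x,[y,z]]$ kills it. Announcing that this step is ``laborious'' is not a proof of it; it is the theorem.

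Part $(ii)$ has a more serious gap. There the only hypothesis is that $Inn(Q)$ is abelian, yet you propose to ``first record the resulting structural consequences,'' namely that commutators lie in $N(Q)$ and associators in $Z(Q)$, so that the formulas from part $(i)$ become available. Deriving those placements from abelianness of the inner mapping group is itself a substantial result in the Niemenmaa--Kepka--Cs\"org\H{o} circle of ideas, not a routine observation, and you give no argument for it. Moreover the key construction --- inner mappings $\sigma,\tau$ and an element $w$ whose commutator displacement equals the associativity defect of $^{\circ}[\cdot,\cdot]$ at $(x,y,z)$ --- is exactly what a proof of $(ii)$ consists of, and you leave it entirely unspecified. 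In its present form the proposal is a plausible strategy with the two decisive verifications missing; this is consistent with the fact that the original Kinyon--Veroff argument was discharged by automated deduction, but it means the proposal cannot be accepted as a proof.
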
 
	
\n  Therefore, we would like to get the conditions of Theorem  \ref{knth} satisfied to find the answer in affirmative. 

\begin{proposition}\label{asct}
$\mathcal{A}({^\circ}G)\subseteq Z(G)$.
\end{proposition}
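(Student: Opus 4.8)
The plan is to compute the associator $A(x,y,z)$ of three arbitrary elements $x,y,z\in{^\circ}G$ explicitly as a word in the group $G$ and then to recognize that word as an element of $[G,G,G]$, which is contained in $Z(G)$ since $G$ has nilpotency class $3$. Recall that $A(x,y,z)$ is defined by $(x\circ y)\circ z = A(x,y,z)\circ(x\circ(y\circ z))$; since $x\circ w = w^{-1}xw^2$, this means $A(x,y,z) = \bigl((x\circ y)\circ z\bigr)\circ\bigl(x\circ(y\circ z)\bigr)^{-1}$, where the outer $\circ$ is again the gyrogroup operation. So first I would expand $(x\circ y)\circ z$ and $x\circ(y\circ z)$ as words in $G$ using $u\circ v = v^{-1}uv^2$ repeatedly, exactly in the style of the computation of ${}^\circ[x,y]$ in the proof of Theorem \ref{cl}.

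The key organizing principle, which I expect to make the calculation tractable, is to work modulo $Z(G)=$ (the third term of the lower central series' complement, i.e. $[G,G,G]\subseteq Z(G)$). Recall from Proposition \ref{p4} and its proof that ${^\circ}G/C({^\circ}G)$ is a group, and $Z(G)\subseteq C({^\circ}G)$; more usefully, the proof of Proposition \ref{p4} shows that $(x\circ y)\circ z$ and $x\circ(y\circ z)$ agree modulo the central subgroup generated by triple commutators $[[y,z^{-1}],x]$. The cleanest route is therefore: (1) carry out both three-fold expansions; (2) cancel the common degree-$\le 2$ part using that $[G,G]$ is abelian and $[G,G,G]\subseteq Z(G)$; (3) observe that the surviving discrepancy is a product of triple commutators, hence central. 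Then since $A(x,y,z)\in Z(G)$, the left–right distinction between $\circ$ and the group product disappears, and one reads off that $A(x,y,z)$ equals that central word.

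An alternative, shorter argument I would also consider: by the proof of Proposition \ref{p4}, for all $x,y,z$ we have $(x\circ y)\circ z \equiv x\circ(y\circ z) \pmod{Z(G)}$ in the group $G$ — indeed that proof shows $((x\circ y)\circ z)C({^\circ}G) = (x\circ(y\circ z))C({^\circ}G)$, and tracking the argument with $Z(G)$ in place of $C({^\circ}G)$ works because the only correction term encountered, $[[y,z^{-1}],x]$, already lies in $[G,G,G]\subseteq Z(G)$. Hence $\bigl((x\circ y)\circ z\bigr)\bigl(x\circ(y\circ z)\bigr)^{-1}\in Z(G)$ as elements of $G$. Finally, because this element $a$ is central in $G$, one has $a\circ w = w^{-1}aw^2 = aw$ and $w\circ a = a^{-1}wa^2 = aw$ for all $w$, so in ${^\circ}G$ the equation $(x\circ y)\circ z = a\circ\bigl(x\circ(y\circ z)\bigr)$ holds with this same $a$; by uniqueness of the associator, $A(x,y,z)=a\in Z(G)$.

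The main obstacle is purely bookkeeping: writing out $(x\circ y)\circ z$ and $x\circ(y\circ z)$ as reduced words in $G$ and verifying that their quotient collapses to a product of triple commutators. The conceptual content — that the class-$3$ hypothesis forces triple commutators central, and that centrality erases the asymmetry of $\circ$ — is exactly what is used repeatedly earlier in the paper (e.g. in Propositions \ref{p4} and the proof of Theorem \ref{cl}), so the risk is only in an arithmetic slip in the exponent-counting, not in the strategy.
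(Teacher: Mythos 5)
Your proposal is correct and follows essentially the same route as the paper: expand $(x\circ y)\circ z$ and $x\circ(y\circ z)$ as words in $G$, observe that they differ by the triple commutator $[[z^{-1},y],x]\in [G,G,G]\subseteq Z(G)$, and use centrality to identify that element with the loop associator. Your second, shorter variant --- reusing the expansion already carried out in the proof of Proposition \ref{p4} and then invoking uniqueness of the associator together with $a\circ w=w\circ a=aw$ for central $a$ --- is a valid and slightly more economical packaging of the same computation.
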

\begin{proof}
	Let $x,y,z\in {^\circ}G$. Then, for $A(x,y,z)\in \mathcal{A}({^\circ}G)$ we have,
	\begin{eqnarray*}	
	A(x,y,z) &=& ((x\circ y)\circ z)\circ (x\circ(y\circ z))^{-1}\\
	&=& (z^{-1}y^{-1}xy^{2}z^{2})\circ (z^{-2}y^{-1}zxz^{-1}yzyz^{2})^{-1}\\
	&=& (z^{-2}y^{-1}zxz^{-1}yzyz^{2})(z^{-1}y^{-1}xy^{2}z^{2})(z^{-2}y^{-1}zxz^{-1}yzyz^{2})^{-2}\\
	&=& z^{-2}y^{-1}zxz^{-1}yzyzy^{-1}xyz^{-1}y^{-1}zx^{-1}z^{-2}y^{-1}zx^{-1}z^{-1}yz^{2}\\
	&=& z^{-2}y^{-1}zxz^{-1}yzyzy^{-1}x[y,z^{-1}]x^{-1}z^{-2}y^{-1}zx^{-1}z^{-1}yz^{2}\\
	&=& z^{-2}y^{-1}zxz^{-1}yz^{2}[[z^{-1}, y], x]z^{-2}y^{-1}zx^{-1}z^{-1}yz^{2}\\
	&=& [[z^{-1}, y], x], \text{because}\; [[z^{-1}, y], x]\in Z(G).
	\end{eqnarray*}
	Thus, $A(x,y,z) = [[z^{-1}, y], x]\in Z(G),$ for all $x,y,z\in {^\circ}G$. Hence,  $\mathcal{A}({^\circ}G)\subseteq Z(G)$.
\end{proof}

\begin{proposition}\label{GNab}
	$({^\circ}G/N({^\circ}G), \circ)$ is an abelian group.
\end{proposition}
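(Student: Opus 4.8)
The plan is to show that $^{\circ}G/N({^\circ}G)$ is both associative and abelian by pushing the two relevant quantities — the associators and the commutators of $^{\circ}G$ — into the nucleus. For associativity, I would use Proposition~\ref{asct}: for all $x,y,z\in{^\circ}G$ we have $A(x,y,z)=[[z^{-1},y],x]\in Z(G)$, and since $Z(G)$ is contained in $N_\lambda({^\circ}G)$, $N_\mu({^\circ}G)$ and $N_\rho({^\circ}G)$ (noted just before Proposition~\ref{ln}), we get $\mathcal{A}({^\circ}G)\subseteq N({^\circ}G)$. By the defining equation $(xy)z=A(x,y,z)(x(yz))$ of the associator, this forces $(\bar x\circ\bar y)\circ\bar z=\bar x\circ(\bar y\circ\bar z)$ in the quotient $^{\circ}G/N({^\circ}G)$, so that quotient is a group. (Here one should first invoke Corollary~\ref{nunl}, which tells us $N({^\circ}G)$ is a normal subloop, so the quotient loop makes sense.)

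Next I would establish commutativity of the quotient, i.e.\ ${^\circ}[x,y]\in N({^\circ}G)$ for all $x,y\in{^\circ}G$. The key input is Equation~(\ref{ec}) from the proof of Theorem~\ref{cl}:
\[
{^\circ}[x,y] = [x,y]^{3}[x,[x,y]]^{2}[y,[x,y]]^{2}.
\]
The two rightmost factors, $[x,[x,y]]$ and $[y,[x,y]]$, are triple commutators and hence lie in $[G,G,G]\subseteq Z(G)\subseteq N({^\circ}G)$. So modulo $N({^\circ}G)$ we are reduced to showing $[x,y]^{3}\in N({^\circ}G)$. For this I would verify that any element $w$ with $[G,G,G]\subseteq Z(G)$ and $w\in[G,G]$ satisfies the nucleus condition: recall from the proof of Proposition~\ref{ln} that $a\in N_\lambda({^\circ}G)$ iff $[a,[x,y^{-1}]]=1$ for all $x,y$, and similarly for $N_\mu,N_\rho$ with the commutator in the appropriate slot. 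Taking $a=[x,y]^3$, the element $[\,[x,y]^3,[u,v^{-1}]\,]$ is a commutator of two elements of $[G,G]$, hence (as $[G,G]$ is abelian — used repeatedly above) equals $1$; the same computation handles the middle and right nucleus conditions. Therefore $[x,y]^{3}\in N_\lambda({^\circ}G)\cap N_\mu({^\circ}G)\cap N_\rho({^\circ}G)=N({^\circ}G)$, and combining the two reductions gives ${^\circ}[x,y]\in N({^\circ}G)$.

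Putting these together: $^{\circ}G/N({^\circ}G)$ is a group (associators trivial) in which every commutator is trivial, hence an abelian group, which is the claim. The main obstacle I anticipate is the bookkeeping in the second part — verifying that $[x,y]^3$ genuinely satisfies all three nucleus conditions, since one must be careful that the relevant commutator-of-commutators vanishes using only $[G,G]$ abelian and $[G,G,G]\subseteq Z(G)$, rather than accidentally invoking a stronger hypothesis; but this is precisely the kind of identity already exploited in Lemma~\ref{l3} and the proof of Proposition~\ref{ln}, so it should go through cleanly.
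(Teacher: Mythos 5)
Your proposal is correct and follows essentially the same route as the paper: Proposition~\ref{asct} plus $Z(G)\subseteq N({^\circ}G)$ gives the group structure on the quotient, and Equation~(\ref{ec}) reduces commutativity to nucleus membership of commutators. The only cosmetic difference is that the paper shows all of $[u,v]$ (not just $[u,v]^3$) lies in $N({^\circ}G)$ by checking the single condition $[x,[[u,v],y^{-1}]]=1$ for $N_{\mu}({^\circ}G)=N({^\circ}G)$, whereas you check the three nucleus conditions separately for $[x,y]^3$; both verifications rest on the same facts ($[G,G]$ abelian and $[G,G,G]\subseteq Z(G)$).
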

\begin{proof}
	By the Proposition \ref{asct}, $\mathcal{A}({^\circ}G)\subseteq Z(G)$ and the fact that $Z(G)\subseteq N({^\circ}G)$, $({^\circ}G/N({^\circ}G), \circ)$ is a group. Since $G$ is of class $3$, for all $u,v,x,y\in {^\circ}G$, 
	
		\[[x,[[u,v],y]] = 1.\]
		
		\n Therefore, $[u,v]\in N_{\mu}({^\circ}G) = N({^\circ}G)$ for all $u,v \in {^\circ}G$. Hence, by the Equation \ref{ec}, ${^\circ}[u,v]\in N({^\circ}G)$. Thus, $({^\circ}G/N({^\circ}G), \circ)$ is an abelian group.
\end{proof}

\begin{proposition}\label{GZgp}
	$({^\circ}G/Z({^\circ}G), \circ)$ is a group.
\end{proposition}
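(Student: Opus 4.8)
The plan is to deduce this immediately from Proposition~\ref{asct}. By that proposition $\mathcal{A}({^\circ}G)\subseteq Z(G)$, and as recorded in the proof of Theorem~\ref{cl} we have $Z(G)\subseteq Z({^\circ}G)$; hence every associator $A(x,y,z)$ of the loop ${^\circ}G$ lies in $Z({^\circ}G)$. Since the center of a loop is always a normal subloop, the quotient ${^\circ}G/Z({^\circ}G)$ is a well-defined loop and the canonical map $\pi\colon {^\circ}G\to {^\circ}G/Z({^\circ}G)$ is a loop homomorphism with kernel $Z({^\circ}G)$.

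The second step is to record the routine fact that a loop homomorphism $\phi\colon L\to M$ satisfies $A(\phi(x),\phi(y),\phi(z))=\phi(A(x,y,z))$ for all $x,y,z\in L$: one applies $\phi$ to the defining identity $(xy)z=A(x,y,z)\,(x(yz))$, uses that $\phi$ preserves products, and invokes the uniqueness of the associator in $M$. Taking $\phi=\pi$, for all $x,y,z\in {^\circ}G$ we obtain $A(\pi(x),\pi(y),\pi(z))=\pi(A(x,y,z))=\overline{1}$, because $A(x,y,z)\in Z({^\circ}G)=\ker\pi$.

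Finally, a loop in which every associator is trivial is associative, and an associative loop is a group; therefore $({^\circ}G/Z({^\circ}G),\circ)$ is a group. There is essentially no obstacle in this argument: the only ingredients are the containment $Z(G)\subseteq Z({^\circ}G)$, already available, and the elementary functoriality of associators under loop homomorphisms. (This proposition, together with Proposition~\ref{GNab} and the hypothesis that $K(\cdot,\cdot)$ is associative, is exactly what is needed to apply Theorem~\ref{knth}$(i)$ to ${^\circ}G$.)
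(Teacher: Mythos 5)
Your argument is correct and is essentially the paper's own proof: the paper also deduces the proposition directly from Proposition~\ref{asct} together with the containment $Z(G)\subseteq Z({^\circ}G)$, so that all associators become trivial in the quotient. You have merely spelled out the intermediate step (functoriality of associators under the quotient map) that the paper leaves implicit.
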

\begin{proof}
	Follows directly form the fact that $Z(G)\subseteq Z({^\circ}G)$ and the Proposition \ref{asct}.
\end{proof}

\begin{proposition}\label{comm}
	The commutator operation $^{\circ}[\cdot, \cdot]: {^\circ}G \times {^\circ}G \longrightarrow {^\circ}G$ is associative if and only if $[[x,y],z]^{9} = [x,[y,z]]^{9}$ for all $x,y,z\in {^\circ}G$.
\end{proposition}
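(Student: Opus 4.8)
The plan is to reduce each side of the associativity equation to an explicit ninth power of a triple commutator. The main tool is the formula for the gyrocommutator obtained in the proof of Theorem~\ref{cl}, namely
\[
{^\circ}[x,y] = [x,y]^{3}\,[x,[x,y]]^{2}\,[y,[x,y]]^{2}, \qquad x,y\in {^\circ}G,
\]
together with the standing facts that $[G,G]$ is abelian and $[G,G,G]\subseteq Z(G)$. The first thing to record is that ${^\circ}[x,y]$ always lies in $[G,G]$, and that the two correction factors $[x,[x,y]]^{2}$ and $[y,[x,y]]^{2}$ are triple commutators, hence central; thus ${^\circ}[x,y] = [x,y]^{3}c$ for some $c\in Z(G)$.

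Next I would compute the two nested gyrocommutators. Setting $w = {^\circ}[x,y]\in [G,G]$, one has $[w,z]\in [G,G,G]\subseteq Z(G)$, so applying the displayed formula to the pair $(w,z)$ the factors $[w,[w,z]]^{2}$ and $[z,[w,z]]^{2}$ drop out, leaving ${^\circ}[w,z] = [w,z]^{3}$. Since $w = [x,y]^{3}c$ with $c$ central, Lemma~\ref{l3}$(i)$ collapses $[w,z]$ to $[[x,y]^{3},z]$, and since $[[x,y],z]\in Z(G)$ commutes with $[x,y]$, the identity $[a^{3},z]=[a,z]^{3}$ gives $[[x,y]^{3},z]=[[x,y],z]^{3}$; hence ${^\circ}[{^\circ}[x,y],z] = [[x,y],z]^{9}$. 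The mirror computation, using Lemma~\ref{l3}$(ii)$ and $[x,[y,z]^{3}]=[x,[y,z]]^{3}$ in place of these, gives in the same way ${^\circ}[x,{^\circ}[y,z]] = [x,[y,z]]^{9}$.

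Finally, associativity of ${^\circ}[\cdot,\cdot]$ means precisely that ${^\circ}[{^\circ}[x,y],z] = {^\circ}[x,{^\circ}[y,z]]$ for all $x,y,z$, so by the two identities just obtained it is equivalent to $[[x,y],z]^{9} = [x,[y,z]]^{9}$ for all $x,y,z\in {^\circ}G$; note that there is no ambiguity in these equalities, since ${^\circ}G$ and $G$ have the same underlying set and the operations $\circ$ and $\cdot$ agree on $Z(G)$, where all the relevant elements live. I do not expect a real obstacle; the only slightly delicate point is tracking which correction terms survive, and the clean way to see it is the observation that any rebracketing of the element ${^\circ}[x,y]\in [G,G]$ falls inside $\gamma_{3}(G)\subseteq Z(G)$, which kills every quadratic correction term in the commutator formula and collapses both sides to a single ninth power.
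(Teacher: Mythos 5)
Your argument is correct and follows essentially the same route as the paper: both start from the formula ${^\circ}[x,y]=[x,y]^{3}[x,[x,y]]^{2}[y,[x,y]]^{2}$ of Theorem~\ref{cl}, observe that for the nested gyrocommutators the quadratic correction terms die because the inner commutator lands in $[G,G,G]\subseteq Z(G)$, and reduce both sides to the ninth powers $[[x,y],z]^{9}$ and $[x,[y,z]]^{9}$ via $[a^{3},z]=[a,z]^{3}$. Your explicit remarks that ${^\circ}[x,y]=[x,y]^{3}c$ with $c$ central and that the operations $\circ$ and $\cdot$ agree on $Z(G)$ are just slightly more detailed bookkeeping of the same steps the paper performs.
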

\begin{proof}
	Let $x,y,z\in {^\circ}G$. Then by the Equation \ref{ec}, we have
	\begin{equation*}
	{^\circ}[{^\circ}[x,y],z] = [{^\circ}[x,y],z]^{3}[{^\circ}[x,y], [{^\circ}[x,y],z]]^{2}[z, [{^\circ}[x,y],z]]^{2}
	\end{equation*}
\n	Now, using the fact that $G$ is nilpotent group of class 3 and the Lemma \ref{l3} $(i)$, $[{^\circ}[x,y],z] = [[x,y]^{3},z]$. Since, $[x,y,z]\in Z(G), [[x,y]^{3},z] = [[x,y],z]^{3}$. Therefore, we get
	\begin{equation*}
	{^\circ}[{^\circ}[x,y],z] = [[x,y],z]^{9}.
	\end{equation*} 
	
	\n By the similar argument, we have	
	\begin{equation*}
	{^\circ}[x,{^\circ}[y,z]] = [x,[y,z]]^{9}.
	\end{equation*}
	
	Thus, the commutator operation $[\cdot, \cdot]$ is associative
	\begin{eqnarray*}
	\iff {^\circ}[{^\circ}[x,y],z] =  {^\circ}[x,{^\circ}[y,z]] \iff [[x,y],z]^{9} = [x,[y,z]]^{9},
	\end{eqnarray*}
	
		\n for all $x,y,z\in {^\circ}G$
\end{proof}

\begin{proposition}\label{innnab}
	Let $G$ be a group such that $3$ does not divide the order of the group $G$. Then the commutator operation $^{\circ}[\cdot, \cdot]: {^\circ}G \times {^\circ}G \longrightarrow {^\circ}G$ is not associative.
\end{proposition}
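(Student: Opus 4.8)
The plan is to combine Proposition~\ref{comm} with Theorem~\ref{p5} (or, more directly, with the structural results on $^{\circ}G$ when $3\nmid|G|$). By Proposition~\ref{comm}, the commutator operation $^{\circ}[\cdot,\cdot]$ is associative if and only if $[[x,y],z]^{9}=[x,[y,z]]^{9}$ for all $x,y,z\in{^\circ}G$. So I would argue by contradiction: assume $^{\circ}[\cdot,\cdot]$ is associative. Then the ninth-power identity above holds for all $x,y,z\in G$.

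The next step is to exploit the hypothesis that $3$ does not divide $|G|$. Since $G$ is finite and $3\nmid|G|$, the map $g\mapsto g^{3}$ is a bijection on $G$; in particular, on each subgroup (such as $[G,G]$ and $[G,G,G]$, which are characteristic), cubing and hence raising to the ninth power is an automorphism of the underlying set, and in fact raising to the ninth power is a bijection on $[G,G,G]$ (which is central and abelian). Therefore $[[x,y],z]^{9}=[x,[y,z]]^{9}$ forces $[[x,y],z]=[x,[y,z]]$ for all $x,y,z\in G$. In other words, the group commutator of $G$ is itself associative on the nose. I would then recall the standard fact that a group whose commutator map is associative has the property that $[[x,y],z]=[x,[y,z]]$; iterating and using that $G$ has class $3$ (so $[[x,y],z]\in Z(G)$) one shows $[G,G,G]$ is trivial, i.e.\ $G$ has class at most $2$. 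This contradicts the standing assumption (in this section $G$ is the finite nilpotent group of class exactly $3$), so $^{\circ}[\cdot,\cdot]$ cannot be associative.

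An alternative and perhaps cleaner route avoids the ``associative commutator $\Rightarrow$ class $2$'' lemma by working directly with Equation~\eqref{ec}. From $^{\circ}[{^\circ}[x,y],z]=[[x,y],z]^{9}$ and $^{\circ}[x,{^\circ}[y,z]]=[x,[y,z]]^{9}$ (both derived inside the proof of Proposition~\ref{comm}), and using that $3\nmid|G|$ makes the ninth power injective on the central subgroup $[G,G,G]$, associativity of $^{\circ}[\cdot,\cdot]$ becomes equivalent to $[[x,y],z]=[x,[y,z]]$. Swapping the roles of the arguments (using $[x,[y,z]]=[x,[z,y]]^{-1}$ and antisymmetry of the group commutator modulo $Z(G)$) one gets $[[x,y],z]=[[x,z],y]^{\,\varepsilon}$-type relations which, combined, collapse $[G,G,G]$ to $1$; again this contradicts class exactly $3$.

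The main obstacle is the passage from the ninth-power identity to the plain identity: one must be careful that $3\nmid|G|$ genuinely implies that raising to the $9$th power is injective \emph{on the relevant elements}, namely the triple commutators, which lie in $[G,G,G]\subseteq Z(G)$, an abelian group of order prime to $3$; there the map $a\mapsto a^{9}$ is an automorphism, so injectivity is immediate. The only other point needing care is invoking the (elementary but slightly folklore) fact that associativity of the group commutator on a class-$3$ group forces class at most $2$; if one prefers to keep the argument self-contained, the direct computation via Equation~\eqref{ec} sketched in the previous paragraph sidesteps this and is the route I would actually write out.
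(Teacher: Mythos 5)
Your first route is essentially the paper's proof: assume associativity, invoke Proposition~\ref{comm}, use that $3\nmid|G|$ makes $a\mapsto a^{9}$ injective on the central subgroup containing the triple commutators to deduce $[[x,y],z]=[x,[y,z]]$, and then contradict the standing assumption that $G$ has class exactly $3$ --- the ``elementary but slightly folklore'' fact you defer to is precisely Levi's theorem \cite{levi1942groups}, which is exactly what the paper cites at this point. Your handling of the ninth-power step (injectivity of $a\mapsto a^{9}$ on $[G,G,G]\subseteq Z(G)$) is in fact more careful than the paper's, which loosely describes $x\mapsto x^{9}$ as an ``isomorphism from $G$ to $G$''.
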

\begin{proof}
	On contrary, suppose that the commutator operation $^{\circ}[\cdot, \cdot]$ is associative. Therefore, by the Proposition \ref{comm}, $[[x,y],z]^{9} = [x,[y,z]]^{9}$ for all $x,y,z\in G$. Since, 3 does not divide the order of the group $G$, 9 does not divide the order of the group $G$. Therefore, there is an isomorphism from $G$ to $G$ given by $x\hookrightarrow x^{9}$ which gives $[[x,y],z] = [x,[y,z]]$. Thus, the commutator operation $[\cdot, \cdot]: G\times G \longrightarrow G$ is associative. This is a contradiction, by the Levi's Theorem  \cite{levi1942groups}. 
\end{proof}
\begin{theorem}
	Let $G$ be a group such that $3$ does not divide the order of the group $G$. Then $Inn({^\circ}G)$ is not abelian.
\end{theorem}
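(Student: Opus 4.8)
The plan is to derive this as an immediate consequence of the machinery already assembled in Sections~4 and~5, using the contrapositive direction of Theorem~\ref{knth}~$(ii)$ due to Kinyon and Veroff. Specifically, Theorem~\ref{knth}~$(ii)$ asserts that if $Inn(Q)$ is abelian, then the loop commutator operation is associative. Taking $Q = {^\circ}G$, it therefore suffices to show that the loop commutator $^{\circ}[\cdot,\cdot]$ on ${^\circ}G$ is \emph{not} associative, and that is exactly the content of Proposition~\ref{innnab}, which is available precisely under the hypothesis that $3$ does not divide the order of $G$.

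The argument would proceed in two short steps. First I would recall that ${^\circ}G$ is a loop: since $G$ is a finite nilpotent group of class $3$ (in particular central by a $2$-Engel group), ${^\circ}G$ is a gyrogroup and hence a loop, so Theorem~\ref{knth} applies to it. Second, I would invoke Proposition~\ref{innnab}: under the standing assumption that $3 \nmid |G|$, the commutator operation $^{\circ}[\cdot,\cdot]$ fails to be associative. Combining these with the contrapositive of Theorem~\ref{knth}~$(ii)$ — if the loop commutator is not associative, then $Inn({^\circ}G)$ is not abelian — yields the claim at once.

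There is essentially no obstacle here: all the real work has already been done, namely the computation of the loop commutator formula in Equation~\eqref{ec} (Theorem~\ref{cl}), the translation in Proposition~\ref{comm} of associativity of $^{\circ}[\cdot,\cdot]$ into the identity $[[x,y],z]^{9} = [x,[y,z]]^{9}$, and the reduction in Proposition~\ref{innnab} via the ninth-power automorphism $x \mapsto x^{9}$ (valid since $9 \nmid |G|$) to Levi's theorem, which forbids a nonabelian group from having an associative commutator. The only point requiring a word of care is the logical direction: Theorem~\ref{knth}~$(ii)$ is a one-way implication, so we must use it contrapositively, and we should not claim any converse. Thus the proof is a single line: by Proposition~\ref{innnab} the loop commutator of ${^\circ}G$ is not associative, so by Theorem~\ref{knth}~$(ii)$ the group $Inn({^\circ}G)$ cannot be abelian.
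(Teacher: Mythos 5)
Your proposal is correct and follows essentially the same route as the paper: the paper argues by contradiction, assuming $Inn({^\circ}G)$ is abelian, applying Theorem~\ref{knth}~$(ii)$ to conclude that $^{\circ}[\cdot,\cdot]$ is associative, and then contradicting Proposition~\ref{innnab}; your contrapositive phrasing is logically identical. Your added care about the one-way direction of Theorem~\ref{knth}~$(ii)$ is appropriate but does not change the argument.
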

\begin{proof}
	On contrary, suppose that $Inn({^\circ}G)$ is abelian. Then by the Theorem \ref{knth} $(ii)$, the commutator operation ${^\circ}[\cdot, \cdot]$ is associative. This is a contradiction, by the Proposition \ref{innnab}.
\end{proof}

\n Note that, if $G_1$ and $G_2$ are two groups, then $^{\circ}(G_1\times G_2)=^{\circ}G_1 \times ^{\circ}G_2$. Since $G$ is nilpotent, it is sufficient to discuss about the associated gyrogroup $^{\circ}G$ for $3$-groups $G$ of nilpotency class $3$. According to Theorem \ref{cl},  ${^\circ}G$ is nilpotent loop of class 2 if and only if $[x,y]^{3}\in C(^{\circ}G)$ for all $x,y \in G$. If $[G,G]$ is of exponent $3$, then by the Theorem \ref{cl}, ${^\circ}G$ is nilpotent loop of class 2. Therefore, if there is a $3$-group for which $[x,y]^{3}\notin C(^{\circ}G)$, exponent of $[G,G]$ is not $3$ and $[[x,y],z]^{9} = [x,[y,z]]^{9}$ for all $x,y,z\in G$, then one can hope to get a loop of class $3$ with abelian inner mapping group.

\vspace{0.2 cm}

\n \textbf{Acknowledgment:} The first author is supported by the Junior Research Fellowship of UGC, India.


\begin{thebibliography}{00}

 \bibitem{pcos} P. Csorgo, "Abelian inner mappings and nilpotency class greater than two", \textit{European J. Combin.}, 28 (2007), 858-868.

 \bibitem{ddv} D. Daly, P. Vojtechovsky, ``Enumeration of nilpotent loops via cohomology", \textit{Journal of Algebra}, 322 (2009) 4080-4098.


	\bibitem{foguel2001gyrogroups} T. Foguel and A. A. Ungar, "Gyrogroups and the decomposition of groups into twisted subgroups and subgroups," \textit{Pacific Journal of Mathematics}, vol. 197, no. 1, pp. 1 - 11, 2001.
	
	\bibitem{rozga2000central} K. R$\acute{o}$zga, "On central extensions of gyrocommutative gyrogroups," \textit{Pacific Journal of Mathematics}, vol. 193, no. 1, pp. 201 -218, 2000.
	
	\bibitem{foguel2000involutory} T. Foguel and A. A. Ungar, "Involutory decomposition of groups into twisted  subgroups and subgroups," \textit{Journal of Group Theory}, vol. 3, no. 1, pp. 27 - 46, 2000.
	
	
	\bibitem{lal2014weak} R. Lal and A. K. Singh, "Weak classification of finite groups," \textit{Asian - European Journal of Mathematics}, vol. 7, no. 4, pp. 1450058, 2014.
	
	\bibitem{levi1942groups} F. Levi, "Groups in which commutator operation satisfies certain algebraic conditions," \textit{J. Indian Math. Soc.}, vol. 6, pp. 87 - 97, 1942.
	
	\bibitem{johnson1990loop} K. W. Johnson and C. R. Leedham-Green, "Loop cohomology," \textit{Czechoslovak Mathematical Journal}, vol. 40, issue 2, pp. 182 - 194, 1990.
	
 	
	\bibitem{ung} A. A. Ungar, "Thomas rotation and the parametrization of the Lorentz transformation group", \textit{Found. Phys. Lett.}, 1(1) (1988) 57-89.
	
\end{thebibliography}

\end{document}